\newcommand*\patchAmsMathEnvironmentForLineno[1]{%
 \expandafter\let\csname old#1\expandafter\endcsname\csname #1\endcsname
 \expandafter\let\csname oldend#1\expandafter\endcsname\csname end#1\endcsname
 \renewenvironment{#1}%
    {\linenomath\csname old#1\endcsname}%
    {\csname oldend#1\endcsname\endlinenomath}}%
\newcommand*\patchBothAmsMathEnvironmentsForLineno[1]{%
 \patchAmsMathEnvironmentForLineno{#1}%
 \patchAmsMathEnvironmentForLineno{#1*}}%
\newcommand{\vol}[1]{\boxplus_{#1}}
\DeclareMathOperator{\interior}{Int}
\definecolor{brightmaroon}{rgb}{0.76, 0.13, 0.28}
\definecolor{linkblue}{rgb}{0, 0.337, 0.227}
\newcommand{\defin}[1]{\emph{\color{brightmaroon}#1}}
\DeclareMathOperator{\tw}{tw}
\DeclareMathOperator{\chicen}{\chi_{\mathrm{cen}}}
\DeclareMathOperator{\chilin}{\chi_{\mathrm{lin}}}
\DeclareMathOperator{\binomial}{binomial}
\newrobustcmd{\onesub}{\mathord{\includegraphics{figs/one-sub}}}
\newrobustcmd{\leftup}{\mathord{\includegraphics{figs/left-up}}}
\title{\MakeUppercase{Linear versus centred chromatic numbers}\thanks{This research was partly funded by NSERC.}}
\author{Prosenjit~Bose\thanks{School of Computer Science, Carleton University},\quad Vida~Dujmović\thanks{Department of Computer Science and EE, University of Ottawa},\quad Hussein~Houdrouge\footnotemark[2],\quad Mehrnoosh~Javarsineh\footnotemark[2],\quad and Pat~Morin\thanks{School of Computer Science, Carleton University, \email{morin@scs.carleton.ca}}}
\DeclareMathOperator{\VE}{\mathit{VE}}
\date{}
\begin{document}

\maketitle
\renewcommand{\E}{\mathbb{E}}
\renewcommand{\Pr}{\mathbb{P}}

\begin{abstract}
  A \emph{centred colouring} of a graph is a vertex colouring in which every connected subgraph contains a vertex whose colour is unique and a \emph{linear colouring} is a vertex colouring in which every (not-necessarily induced) path contains a vertex whose colour is unique.  For a graph $G$, the \emph{centred chromatic number} $\chicen(G)$ and the \emph{linear chromatic number} $\chilin(G)$ denote the minimum number of distinct colours required for a centred, respectively, linear colouring of $G$. From these definitions, it follows immediately that $\chilin(G)\le \chicen(G)$ for every graph $G$. The centred chromatic number is equivalent to treedepth and has been studied extensively. Much less is known about linear colouring.  Kun \etal\ [\textit{Algorithmica} \textbf{83}(1)] prove that $\chicen(G) \le \tilde{O}(\chilin(G)^{190})$ for any graph $G$ and conjecture that $\chicen(G)\le 2\chilin(G)$.  Their upper bound was subsequently improved by Czerwinski \etal\ [\textit{SIDMA} \textbf{35}(2)] to $\chicen(G)\le\tilde{O}(\chilin(G)^{19})$. The proof of both upper bounds relies on establishing a lower bound on the linear chromatic number of pseudogrids, which appear in the proof due to their critical relationship to treewidth.  Specifically, Kun \etal\ prove that $k\times k$ pseudogrids have linear chromatic number $\Omega(\sqrt{k})$. Our main contribution is establishing a tight bound on the linear chromatic number of pseudogrids, specifically $\chilin(G)\ge \Omega(k)$ for every $k\times k$ pseudogrid $G$. As a consequence we improve the general bound for all graphs to $\chicen(G)\le \tilde{O}(\chilin(G)^{10})$. In addition, this tight bound gives further evidence in support of Kun \etal's conjecture (above) that the centred chromatic number (i.e., the treedepth) of any graph is upper bounded by a linear function of its linear chromatic number.
\end{abstract}

\section{Introduction}

Let $G$ be a simple undirected graph.  A \defin{$k$-colouring} of $G$ is any function $\varphi:V(G)\to C$ where $C$ is a set of size $k$.  A vertex $v$ of $G$ is a \defin{centre} of $G$ with respect to $\varphi$ if $\varphi(v)\not\in\{\varphi(w):w\in V(G)\setminus\{v\}\}$, i.e., $v$ is the unique vertex of $G$ having colour $\varphi(v)$.  A colouring $\varphi$ of $G$ is \defin{centred} if every connected subgraph of $G$ has a centre with respect to $\varphi$. A colouring $\varphi$ of $G$ is \defin{linear} if every path\footnote{A \defin{path} in a graph $G$ is a sequence of distinct vertices $v_0,\ldots,v_r$ such that $v_{i-1}v_i$ is an edge of $G$ for each $i\in\{1,\ldots,r\}$.} in $G$ has a centre with respect to $\varphi$.  The \defin{centred chromatic number} $\chicen(G)$ of $G$ is the minimum integer $c$ such that $G$ has a centred $c$-colouring and the \defin{linear chromatic number} $\chilin(G)$ is the minimum integer $c$ such $G$ has a linear $c$-colouring.  Since each edge of $G$ is both a connected subgraph and a path in $G$, any centred colouring and any linear colouring is a proper colouring of $G$.

The centered chromatic number of any graph $G$ is equal to the treedepth of $G$ and has been studied extensively \cite{nesetril.ossona:sparsity,nesetril.ossona:on,nesetril.ossona:grad,nesetril.ossona:tree-depth,pilipczuk.siebertz:polynomial,dereniowski.nadolski:vertex,dereniowski.kubale:cholesky,deogun.kloks.ea:on,bodlaender.gilbert.ea:approximating,bodlaender.deogun.ea:rankings}. Much less is known about the linear chromatic number.  Linear chromatic number was introduced by \citet{kun.obrien.ea:polynomial} who were motivated by finding efficiently-computable approximations of treedepth in bounded expansion classes.  Since every path in $G$ is a connected subgraph of $G$, every centred colouring of $G$ is also a linear colouring of $G$, so $\chilin(G)\le\chicen(G)$. In the other direction, \citet{kun.obrien.ea:polynomial} were able to establish that $\chicen(G)\le \chilin(G)^{190}\cdot(\log(\chilin(G)))^{O(1)}$.  This upper bound was subsequently improved by \citet{czerwinski.nadara.ea:improved}, who reduced the exponent to $19$.

\begin{thm}[\cite{kun.obrien.ea:polynomial,czerwinski.nadara.ea:improved}]\label{kun_obrien_general}
  For any graph $G$, $\chicen(G)\le (\chilin(G))^{19}\cdot(\log(\chilin(G)))^{O(1)}$.
\end{thm}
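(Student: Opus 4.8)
The plan is to derive Theorem~\ref{kun_obrien_general} by chaining three facts together, using throughout the equivalence $\chicen(G)=\td(G)$. Fix a graph $G$ and write $\ell:=\chilin(G)$; the goal is to show $\td(G)\le\ell^{19}(\log\ell)^{O(1)}$.

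\emph{Step 1: longest paths are short.} In a path $P_n$ every connected subgraph is a subpath, so a linear colouring of $P_n$ is literally a centred colouring of $P_n$, whence $\chilin(P_n)=\chicen(P_n)=\td(P_n)=\lceil\log_2(n+1)\rceil$. Since $\chilin$ is monotone under passing to subgraphs, $G$ contains no path on $2^{\ell}$ or more vertices.

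\emph{Step 2: treewidth is polynomial in $\ell$.} This is the crux, and the step into which the pseudogrid estimate feeds. By the polynomial grid-minor theorem of Chuzhoy and Tan, a graph of treewidth $t$ contains, as a subgraph, a $k\times k$ pseudogrid with $k=\Omega(t^{1/9}/\mathrm{polylog}(t))$. On the other hand, \citet{kun.obrien.ea:polynomial} show that every $k\times k$ pseudogrid has linear chromatic number $\Omega(\sqrt k)$ (it is precisely this inequality that the present paper sharpens to $\Omega(k)$). Combining the two with the subgraph-monotonicity of $\chilin$ yields $\ell=\Omega(t^{1/18}/\mathrm{polylog}(t))$, i.e.\ $\tw(G)=O(\ell^{18}(\log\ell)^{O(1)})$.

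\emph{Step 3: treedepth from treewidth and path length.} Finally apply the refinement of the folklore bound $\td(G)=O(\tw(G)\log|V(G)|)$ due to \citet{czerwinski.nadara.ea:improved}, namely $\td(G)=O(\tw(G)\cdot\log p)$ where $p$ is the number of vertices of a longest path in $G$. By Step~1 we have $\log p=O(\ell)$, so
\[
  \chicen(G)=\td(G)=O\bigl(\tw(G)\cdot\ell\bigr)=O\bigl(\ell^{19}(\log\ell)^{O(1)}\bigr),
\]
which is the claimed bound. The main obstacle is Step~2, and inside it the pseudogrid lower bound: one must argue that any linear colouring of a $k\times k$ pseudogrid using few colours can be defeated by some path that threads the grid and along which every colour is repeated. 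The grid-minor theorem and the treedepth-versus-treewidth bound are used as black boxes; what remains is bookkeeping of how their polynomial exponents compose — the exponent $19$ being, roughly, $2\cdot 9+1$: a square root surrendered to the pseudogrid bound, a ninth power surrendered to the grid-minor theorem, and one further factor of $\ell$ from the longest-path logarithm.
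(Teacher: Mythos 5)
Your Steps 1 and 2 are sound, and Step 2 is exactly the route the paper describes: the Chuzhoy--Tan grid-minor theorem with exponent $9$, composed with the $\Omega(\sqrt{k})$ lower bound for $k\times k$ pseudogrids from Lemma~5 of Kun et al., gives $\tw(G)=O(\ell^{18}(\log\ell)^{O(1)})$. The gap is in Step 3. The bound $\td(G)=O(\tw(G)\cdot\log p)$, with $p$ the order of a longest path, is false: the complete binary tree of height $h$ has treewidth $1$ and longest path on only $2h+1$ vertices, so your right-hand side is $O(\log h)$, while its treedepth is $h+1$. This is precisely why the theorem of Czerwi\'nski, Nadara and Pilipczuk is phrased in terms of subcubic trees rather than paths: if $\td(G)\ge C\cdot a\cdot b$ then $\tw(G)\in\Omega(a)$ or $G$ contains a subcubic tree of treedepth $\Omega(b)$ as a subgraph. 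Your Step 1 rules out long paths but says nothing about the subcubic-tree obstruction, so it cannot close the second branch of this dichotomy.

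To repair the argument you need the other ingredient the paper invokes, namely Theorem~4 of Kun et al.: for bounded-degree (in particular subcubic) trees, the centred chromatic number is bounded by a linear function of the linear chromatic number, so a subcubic tree of treedepth $\Omega(k)$ already forces $\chilin\in\Omega(k)$. Applying the dichotomy with $a=k^{18}\log^q k$ and $b=k$, either branch then yields $\chilin(G)\in\Omega(k)$, and the exponent $19=18+1$ arises as the product of the treewidth branch and the subcubic-tree branch --- not from a longest-path logarithm. Your final arithmetic and the overall shape of the bound are otherwise correct.
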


\citet{kun.obrien.ea:polynomial} construct a family of graphs that contains, for every $\epsilon > 0$, a graph $G$ with $\chicen(G)\ge (2-\epsilon)\chilin(G)$. They conjecture that this bound is tight:

\begin{conj}[\cite{kun.obrien.ea:polynomial}]\label{crazy_conjecture}
  For every graph $G$, $\chicen(G)\le 2\chilin(G)$.
\end{conj}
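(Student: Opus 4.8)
The plan is to establish the equivalent inequality $\td(G)\le 2\chilin(G)$ by strong induction on $\ell:=\chilin(G)$, driven by the standard recursion for treedepth: $\td(G)=\max_i\td(G_i)$ when $G$ has connected components $G_1,\dots,G_r$, and $\td(G)=1+\min_{v\in V(G)}\td(G-v)$ when $G$ is connected and nonempty. Two easy facts grease the induction: $\chilin$ never increases under deleting a vertex or passing to a component (restrict the colouring), and the base case $\ell\le1$ is trivial since then $G$ is edgeless and $\td(G)=1$. The real content is forced into a reduction step, and it cannot be a one-vertex step: if deleting some vertex always dropped $\chilin$, induction would give the far stronger $\td(G)\le\chilin(G)$, contradicting the $(2-\epsilon)$-ratio examples of \citet{kun.obrien.ea:polynomial}. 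So the step must delete a set $S$ and charge two deletions to each unit of colour budget it buys back. The target is therefore the following \emph{reduction claim}: if $G$ is connected and $\chilin(G)=\ell\ge2$, there is $S\subseteq V(G)$ with $\chilin(C)\le\ell-\lceil|S|/2\rceil$ for every component $C$ of $G-S$. Granting this, apply the treedepth recursion $|S|$ times and then the induction hypothesis to each component of $G-S$ to get $\td(G)\le|S|+2\bigl(\ell-\lceil|S|/2\rceil\bigr)\le2\ell$.

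To attack the reduction claim I would fix a linear $\ell$-colouring $\varphi$ of $G$ and analyse the \emph{obstructions} — inclusion-minimal connected subgraphs $H\subseteq G$ with $\chilin(H)\ge\ell$. One natural attempt is to let $S$ be a small hitting set for a maximal packing of pairwise vertex-disjoint obstructions: maximality would then leave each component of $G-S$ obstruction-free, hence with $\chilin\le\ell-1$, and one would hope $|S|$ can be taken to be twice the number of packed obstructions while the colour budget in each surviving component drops by a matching $\lceil|S|/2\rceil$. The difficulty is that this accounting is not clearly valid: obstructions in $G$ overlap in complicated ways, minimality is not preserved under deletion (so a maximal packing need not dominate the obstructions of $G-S$), and removing two vertices from each obstruction need not reduce $\chilin$ of a large component by more than $1$. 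A corrected argument probably has to replace ad hoc hitting sets by an initial segment of a carefully chosen elimination order and show that clearing it retires two colour classes at a time. Since $\chilin$ is a path parameter, not a subtree parameter, there is no ready excluded-configuration list for obstructions; I expect Kun--O'Brien et al.'s extremal family, which pins the constant at exactly $2$, to be the object to reverse-engineer in order to find the correct two-for-one bookkeeping.

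The crux, and the main obstacle, is exactly this reduction claim — in particular the exactness of the ``two deletions per recovered colour'' rate. Softer tools do not reach it: $\chilin(G)\le\ell$ forbids long paths and large pseudogrids and hence (this is how Theorem~\ref{kun_obrien_general} is obtained) bounds treewidth polynomially, and bounded treewidth together with no long path gives $\td(G)=\mathrm{poly}(\ell)$; but every separator or divide-and-conquer scheme loses a $\Theta(\log|V(G)|)$ factor unless the colour budget itself goes down, so a successful proof must show that each round of deletions permanently retires colour classes in the affected components and must pin the rate at $2$ rather than merely $O(1)$. Two further difficulties are that $\chilin$ is not known to be minor-monotone, so grid/wall machinery is not available off the shelf, and that even formulating ``the essential part'' of an obstruction under linear colourings — the notion on which the two-for-one accounting would rest — is currently open. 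In short, the skeleton (induction on $\ell$ via a two-for-one reduction) is clear, but its heart, a structural theory of linearly-critical subgraphs sharp enough to yield the factor $2$, is the substance of Conjecture~\ref{crazy_conjecture}.
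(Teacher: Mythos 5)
You have not proved the statement, and indeed you could not have by assembling known facts: \cref{crazy_conjecture} is an open conjecture of \citet{kun.obrien.ea:polynomial}, and this paper does not prove it either --- it only supplies evidence, namely that $k\times k$ pseudogrids satisfy $\chilin\in\Omega(k)$ (\cref{pseudogrid_lower_bound}), so that $\chicen\in\Theta(\chilin)$ on pseudogrids, and that the general bound improves from exponent $19$ to exponent $10$ (\cref{kun_obrien_general2}). The concrete gap in your write-up is the ``reduction claim'': that every connected $G$ with $\chilin(G)=\ell\ge 2$ contains a set $S$ with $\chilin(C)\le \ell-\lceil|S|/2\rceil$ for every component $C$ of $G-S$. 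You give no proof of it, and your own discussion concedes that it is the substance of the conjecture. In fact it is strictly stronger than anything currently known: even the analogous claim with any constant ``deletions per recovered colour'' rate would yield $\td(G)\le O(\chilin(G))$, whereas the best known general relationship is polynomial of degree $10$ (\cref{kun_obrien_general,kun_obrien_general2}); no linear, or even subpolynomial-exponent, bound is known for any class beyond bounded-degree trees and (by this paper) pseudogrids. Your proposed route to the claim --- hitting a maximal packing of $\chilin$-critical subgraphs --- runs into exactly the problems you list yourself: $\chilin$ is a path parameter with no known structure theory for critical subgraphs, deleting two vertices per obstruction is not known to lower $\chilin$ of the surviving components, and $\chilin$ is not known to be minor-monotone, so none of the grid/wall machinery transfers. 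So what you have is a (correct but routine) observation that the conjecture follows from an unproved two-for-one elimination lemma, not a proof.

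For contrast, the paper's actual work on this topic goes in a different direction: rather than attempting the conjecture, it strengthens the lower-bound ingredient (Point~3 of the proof of \cref{kun_obrien_general}) by showing that an $\Omega(k)\times\Omega(k)$ grid minor already forces $\chilin\in\Omega(k)$, via a delicate construction of an uncentred path through a well-separated vertex set, using a polygamous Hall argument and the Lov\'asz Local Lemma. If you want to make progress on \cref{crazy_conjecture} itself, the honest framing of your document is as a reduction of the conjecture to your elimination lemma, together with an explanation of why that lemma is at least as hard as the open problem $\td\le O(\chilin)$.
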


This is a very bold conjecture since until now the only class of graphs for which a linear bound is known is the class of bounded degree trees. Specifically, for any tree $T$ of maximum-degree $\Delta\ge 3$, $\chicen(T) \le (\log_2(\Delta))\chilin(G)$ \cite[Theorem~4]{kun.obrien.ea:polynomial}.

To prove \cref{kun_obrien_general}, \citet{kun.obrien.ea:polynomial} establish the critical role that lower bounds on the linear chromatic number of pseudogrids and subcubic trees play in establishing an upper bound on $\chicen(G)$ as a function of $\chilin(G)$. In their work, they establish (asymptotically) tight lower bounds for the linear chromatic number of subcubic trees, but their lower bounds for pseudogrids are not tight.

With the goal of better understanding the difficulty of \cref{crazy_conjecture}, our objective in this paper is to establish a tight lower bound on the linear chromatic number of pseudogrids. To put our results into context and to be more precise, we first summarize the proof of \cref{kun_obrien_general}:

\begin{enumerate}
  \item A theorem of \citet{czerwinski.nadara.ea:improved} shows that, if $\chicen(G)\ge k^{19}\log^{q} k$ then $G$ contains a subcubic tree of treedepth $\Omega(k)$, or $\tw(G)\in\Omega(k^{18}\log^q k)$.\footnote{The result of \citet{czerwinski.nadara.ea:improved} is, of course, more general:  If $\chicen(G)\ge a\times b$, then $\tw(G)\in\Omega(a)$ or $G$ contains a subcubic tree of treedepth $\Omega(b)$. This is just an application of their result with $a=k^{18}\log^q k$ and $b=k$.}  In the former case, an asymptotically optimal result of \citet{kun.obrien.ea:polynomial} on subcubic trees completes the proof, so we are left with the case where $\tw(G)\in\Omega(k^{18}\log^q k)$.

  \item The current-best version of the Excluded Grid Theorem due to \citet{chuzhoy.tan:towards} shows that, if $\tw(G)\in\Omega(k^{18}\log^q k)$ (for a particular fixed positive $q$), then $G$ contains an $\Omega(k^2)\times \Omega(k^2)$ grid minor (equivalently, $G$ contains a $\Omega(k^2)\times\Omega(k^2)$ pseudogrid as a subgraph).

  \item Points~1 and 2 demonstrate that in order to establish \cref{kun_obrien_general}, a lower bound on the linear chromatic number of a $\Omega(k^2)\times\Omega(k^2)$ grid minor (i.e., pseudogrid) is needed.  \citet[Lemma~5]{kun.obrien.ea:polynomial} establish such a lower bound. Specifically, they show that, for any $\Omega(k^2)\times \Omega(k^2)$ pseudogrid $G$, $\chilin(G)\in\Omega(k)$.
  In the current work, we spend considerable effort to prove the following tight bound:
\end{enumerate}

\begin{lem}\label{pseudogrid_lower_bound}
  For any $k\times k$ pseudogrid $G$, $\chilin(G)\in\Omega(k)$.
\end{lem}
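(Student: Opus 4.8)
The goal is to show that any $k\times k$ pseudogrid $G$ requires $\Omega(k)$ colours in any linear colouring. The natural strategy is to argue by contradiction: suppose $\varphi$ is a linear colouring of $G$ using few colours, say $c=\varphi$ uses at most $\epsilon k$ colours for a small constant $\epsilon>0$, and derive a long path in $G$ with no centre. A path $\pi$ fails to have a centre precisely when every colour appearing on $\pi$ appears at least twice; so the task is to route a path through the pseudogrid that sees every colour it encounters an even-or-more number of times.

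**Key steps.** First I would pass to a large clean interior subgrid: by the deletion operation described in the preliminaries, after throwing away a constant fraction of rows and columns we may assume we are working inside $\interior_r(G)$ for suitable $r$, so that every relevant row and column is a genuine path of $G$ and we have room to reroute locally. Second, I would set up a ``pairing'' or Hall-type argument: think of the rows $R_1,\dots,R_m$ (with $m=\Omega(k)$) as the left side $L(H)$ of a bipartite graph and the colour classes as the right side $R(H)$, where a row is joined to a colour if that colour is ``plentiful'' on (or near) that row — plentiful meaning it occurs enough times that we can force the path to collect an even count of it. If few colours are used, an averaging argument shows each row meets many colours, so the Hall condition $|N_H(A)|\ge d|A|$ of \cref{d_hall} holds, and we extract a structured assignment of colours to rows. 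Third, using this assignment, I would build the witnessing path greedily, processing rows (or columns) one at a time: within each row the path travels horizontally, and at carefully chosen points it makes short vertical ``detours'' into neighbouring rows/paths $P_v$ to pick up a second occurrence of any colour that would otherwise be a singleton on the path so far. The three cases (Q1)--(Q3) for the vertex-paths $P_v$ are exactly what controls whether a vertical detour at a given grid vertex is available, which is why we needed the clean interior and the flexibility of choosing detour locations. Finally, one checks the path is simple (detours are short, disjoint, and confined to a bounded neighbourhood of each grid vertex) and that every colour on it has been duplicated, contradicting that $\varphi$ is linear; hence $c=\Omega(k)$.

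**Main obstacle.** The hard part is the bookkeeping that guarantees *every* colour on the final path gets duplicated *without* introducing a new singleton. Each time we detour to duplicate colour $\alpha$, the detour itself traverses new vertices carrying other colours, which may create fresh singletons; the argument must either (i) show detours can be chosen to stay within colours already seen twice, or (ii) set things up so duplications cascade to a fixed point, or (iii) use the Lovász Local Lemma (\cref{weighted_lovasz}) with a random routing — choosing detour directions/positions at random so that, for each colour, the bad event ``this colour ends up a singleton on the path'' has small probability and limited dependency, then concluding a good routing exists. Getting the dependency structure sparse enough for the asymmetric local lemma to apply — essentially, showing that far-apart portions of the path interact only through boundedly many colours — is where I expect the real work to be, and is presumably why the authors invoke the local lemma ``late in the game.''
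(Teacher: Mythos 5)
Your top-level strategy (contradiction, reduce to a sub-pseudogrid where every colour is frequent, then exhibit an uncentred path using Hall and the Local Lemma) matches the paper's, but there is a genuine gap at the heart of your plan, and it is visible in your ``main obstacle'' paragraph. You worry that each detour made to duplicate a colour $\alpha$ traverses new vertices whose colours may become fresh singletons, and you propose to manage this by restricting detours, iterating to a fixed point, or randomizing the routing. This framing is the wrong one, and none of your three fixes is likely to close it: a cascading/fixed-point argument has no obvious termination, and a Local Lemma over ``colour $\alpha$ ends up a singleton on the path'' has unbounded dependencies, since a single global path couples all colours. The move that dissolves the problem entirely is to first select a set $S$ containing \emph{two vertices of every colour used in the reduced pseudogrid $G'$} (not merely the colours the path happens to meet), and only then build a path through all of $S$. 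Since the path lives inside $G'$, every colour it picks up incidentally is some colour of $G'$, and every colour of $G'$ already appears at least twice on the path via $S$; no singleton can arise, so no bookkeeping about detours is needed. This is \cref{doubled_colour_set} plus \cref{pick_up_everything} in the paper.

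Relatedly, your proposal does not engage with what is actually hard, namely the geometric separation needed to route one simple path through all the selected vertices. The paper requires $S$ to satisfy $|\tilde{B}_r(v)\cap S|\le 2$ for each $v\in S$, so that the chosen vertices decompose into well-separated pairs, each handled by a local rerouting of a snake path (\cref{pick_up_two}). Achieving this separation while still getting two vertices of every colour is where \cref{d_hall} and \cref{weighted_lovasz} are really used: a greedy first round places well-separated pairs for most colours; for the failed colours, a random permutation of the already-placed objects ``claims'' nearby objects, the Local Lemma shows each failed colour is claimed by many objects with positive probability, and Hall's theorem extracts a system of two representatives per failed colour that preserves separation. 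Your bipartite graph between rows and colours, and your row-by-row greedy routing with ad hoc detours, do not provide any mechanism to prevent three or more selected vertices from crowding into one local neighbourhood, which can make a single simple path through all of them impossible (the paper's \cref{bad_examples} illustrates exactly this failure mode). As written, your construction would not go through without this separation machinery.
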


This improves the exponent in \cref{kun_obrien_general} from $19$ to $10$, yielding the following improvement to \cref{kun_obrien_general}:

\begin{thm}\label{kun_obrien_general2}
  For any graph $G$, $\chicen(G)\in (\chilin(G))^{10}\log(\chilin(G))^{O(1)}$.
\end{thm}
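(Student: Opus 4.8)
The plan is to run the same three-step argument that establishes \cref{kun_obrien_general}, replacing its third ingredient by a stronger technical lemma whose proof is the real content of the paper. Concretely, the target is the following strengthening of \cite[Lemma~5]{kun.obrien.ea:polynomial}: there is a constant $c>0$ such that every graph $G$ containing a $k\times k$ grid minor has $\chilin(G)\ge ck$. (The old lemma only extracts $\chilin(G)\ge c\sqrt{k}$ from a $k\times k$ grid minor.) Granting this lemma, \cref{kun_obrien_general2} follows from a short exponent count, described next; the lemma itself is then proved in the remainder of the paper via pseudogrids.

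For the deduction, let $\ell=\chilin(G)$, and let $k$ be the largest integer with $k^{10}\log^{q}k\le\chicen(G)$, where $q$ is a constant fixed below; then $\chicen(G)\in O(k^{10}\log^{q}k)$. Apply the theorem of \citet{czerwinski.nadara.ea:improved} with $a=k^{9}\log^{q}k$ and $b=k$, which is legitimate since $ab=k^{10}\log^{q}k\le\chicen(G)$. In its first outcome, $G$ contains a subcubic tree $T$ with $\td(T)\in\Omega(k)$, and the asymptotically tight bound of \citet{kun.obrien.ea:polynomial} for subcubic trees gives $\chilin(T)\in\Omega(\td(T))=\Omega(k)$; since every linear colouring of $G$ restricts to a linear colouring of the subgraph $T$, we get $\chilin(G)\ge\chilin(T)\in\Omega(k)$. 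In its second outcome, $\tw(G)\in\Omega(k^{9}\log^{q}k)$, and choosing $q$ large enough that this meets the hypothesis of the Excluded Grid Theorem of \citet{chuzhoy.tan:towards} --- exactly as $\tw(G)\in\Omega(k^{18}\log^{q}k)$ forces an $\Omega(k^{2})\times\Omega(k^{2})$ grid minor --- we obtain an $\Omega(k)\times\Omega(k)$ grid minor in $G$, whence the improved technical lemma gives $\chilin(G)\in\Omega(k)$. Either way $\ell\in\Omega(k)$, so $k\in O(\ell)$ and $\chicen(G)\in O(k^{10}\log^{q}k)\subseteq\ell^{10}\log^{O(1)}\ell$. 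The exponent $10$ is precisely $9+1$: the $9$ is the exponent in the current Excluded Grid Theorem, now invoked to produce a $k\times k$ rather than a $k^{2}\times k^{2}$ grid minor, and the $+1$ is the $b=\Theta(\ell)$ factor that the Czerwi\'nski--Nadara split absorbs.

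The main obstacle is therefore the improved technical lemma, which is where the pseudogrid machinery enters; the plan for its proof has two parts. First, I would show that any $k\times k$ grid minor of $G$ can be cleaned into a subgraph of $G$ containing an $\Omega(k)\times\Omega(k)$ \emph{pseudogrid} --- a relaxation of the grid that retains enough of its horizontal/vertical path-and-crossing structure to survive extraction from arbitrary, possibly wild branch sets, yet stays rigid enough to force a large linear chromatic number. Second, and this is the heart of the matter, I would prove that every $k\times k$ pseudogrid has linear chromatic number $\Omega(k)$. The difficulty here is that a linear colouring need only be defeated by paths, not by all connected subgraphs, so the lower bound has to exhibit, for \emph{any} colouring using fewer than $ck$ colours, a single long path with no centre; I expect this to require routing such a path so that it weaves across $\Omega(k)$ rows and columns of the pseudogrid in a pattern that makes a repeated colour on the path unavoidable, and taming this weaving against an adversarial colouring is the step I expect to be genuinely hard.
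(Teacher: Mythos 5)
Your proposal follows essentially the same route as the paper: the identical Czerwi\'{n}ski--Nadara split with $a=k^{9}\log^{q}k$ and $b=k$, the identical appeal to the Chuzhoy--Tan Excluded Grid Theorem, and the identical replacement of \cite[Lemma~5]{kun.obrien.ea:polynomial} by the claim that a $k\times k$ grid minor forces $\chilin(G)\in\Omega(k)$, which the paper likewise reduces (citing \citet{kun.obrien.ea:polynomial} for the minor-to-pseudogrid step) to \cref{pseudogrid_lower_bound}. The exponent bookkeeping ($10=9+1$) is correct, with the caveat that the pseudogrid lower bound you rightly single out as the genuinely hard step is only sketched in your plan and constitutes essentially all of the paper's actual work.
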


In addition to the improvement on the exponent in \cref{kun_obrien_general}, \cref{pseudogrid_lower_bound} adds further evidence in support of \cref{crazy_conjecture} by establishing that, when $G$ is a $k\times k$ pseudogrid, $\chicen(G)\in\Theta(\chilin(G))$.

Any further improvement to \cref{kun_obrien_general2} will either require an improved Excluded Grid Theorem or an entirely new approach. However, no improvement to the Excluded Grid Theorem will sufficient to establish a linear relationship between the centred and the linear chromatic number.  Indeed, the best possible Excluded Grid Theorem would state that any graph of treewidth $k^2\log k$ contains an $\Omega(k)\times\Omega(k)$ grid minor \cite{robertson.seymour.ea:quickly}, and the preceding argument would only show that $\chicen(G)\in O((\chilin(G))^3\log(\chilin(G)))$.  Even the Excluded Grid Theorem for Planar Graphs states that any planar graph of treewidth $k$ contains an $\Omega(k)\times\Omega(k)$ grid minor
\cite{robertson.seymour.ea:quickly}.
Combining this with the argument above and \cref{pseudogrid_lower_bound} shows only that, for any planar graph $G$, $\chicen(G)\in\Theta((\chilin(G))^2)$.

\section{Preliminaries}

In this paper, all graphs are simple and undirected. For a graph $G$, $V(G)$ denotes the vertex set of $G$, $E(G)$ denotes the edge set of $G$ and $\VE(G):=V(G)\cup E(G)$ denotes the set of vertices and edges of $G$. We will usually refer to an arbitrary element/edge/vertex in $\VE(G)$ as an \defin{object}. For a vertex $v\in V(G)$,  $N_G(v):=\{w\in V(G):vw\in E(G)\}$  denotes the open neighbourhood of $v$ in $G$ and for any set $S\subseteq V(G)$, $N_G(S):=\{w\in V(G):vw\in E(G),\, v\in S,\, w\not\in S\}$.  We use $\deg_G(v):=|N_G(v)|$ to denote the \defin{degree} of the vertex $v$ in the graph $G$.

For a  bipartite graph $H$, the two parts of $V(H)$ are denoted by $L(H)$ and $R(H)$ and we use the convention of writing an edge $xy$ so that its first endpoint $x$ is in $L(H)$ and its second endpoint $y$ is in $R(H)$.  A \defin{matching} $M$ in a bipartite graph $H$ is a subgraph of $H$ in which each vertex has degree at most $1$.  We say that $M$ \defin{saturates} a set $S\subseteq V(H)$ if $\deg_M(v)=1$ for each $v\in S$.  We make use of (the difficult half of) Hall's Marriage Theorem (see, for example \citet[Theorem~2.1.2]{diestel:graph}):

\begin{thm}[\citet{hall:on}]\label{hall}
  Let $H$ be a bipartite graph with the property that $|N_H(A)|\ge |A|$ for each $A\subseteq L(H)$.  Then $H$ contains a matching that saturates $L(H)$.
\end{thm}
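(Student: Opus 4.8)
The plan is to prove this by induction on $|L(H)|$, using the classical Halmos--Vaughan argument. When $|L(H)|=0$ the empty matching works, and when $|L(H)|=1$ the hypothesis applied to $A=L(H)$ guarantees a neighbour to which the single vertex of $L(H)$ can be matched; this settles the base cases. For the inductive step I would split into two cases according to whether the Hall condition is slack on every proper nonempty subset of $L(H)$ or is tight on at least one such subset.

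\emph{Case 1: $|N_H(A)|\ge|A|+1$ for every $A$ with $\emptyset\neq A\neq L(H)$.} Since $|N_H(L(H))|\ge|L(H)|\ge 1$, there is an edge $xy\in E(H)$. Let $H'$ be the bipartite graph obtained from $H$ by deleting $x$ and $y$, so $L(H')=L(H)\setminus\{x\}$. For every $A\subseteq L(H')$ we have $N_{H'}(A)\supseteq N_H(A)\setminus\{y\}$, hence $|N_{H'}(A)|\ge|N_H(A)|-1\ge|A|$, where for $A\neq\emptyset$ we used the strict inequality of this case and for $A=\emptyset$ the bound is trivial. Thus $H'$ satisfies the hypothesis, and since $|L(H')|<|L(H)|$, induction yields a matching of $H'$ saturating $L(H')$; adjoining the edge $xy$ gives the desired matching of $H$ saturating $L(H)$.

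\emph{Case 2: there is a set $A$ with $\emptyset\neq A\neq L(H)$ and $|N_H(A)|=|A|$.} Let $H_1$ be the subgraph of $H$ induced by $A\cup N_H(A)$, regarded as bipartite with parts $A$ and $N_H(A)$. For $B\subseteq A$ we have $N_{H_1}(B)=N_H(B)$ (as $N_H(B)\subseteq N_H(A)$), so $H_1$ inherits the hypothesis; as $|A|<|L(H)|$, induction gives a matching $M_1$ of $H_1$ saturating $A$. Now let $H_2$ be obtained from $H$ by deleting $A$ and $N_H(A)$. The crucial point is that $H_2$ still satisfies the hypothesis: for $B\subseteq L(H)\setminus A$ one checks $N_{H_2}(B)=N_H(A\cup B)\setminus N_H(A)$, and therefore $|N_{H_2}(B)|=|N_H(A\cup B)|-|N_H(A)|\ge|A\cup B|-|A|=|B|$, using $N_H(A)\subseteq N_H(A\cup B)$, the hypothesis for $A\cup B$, the equality $|N_H(A)|=|A|$, and $A\cap B=\emptyset$. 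Since $A\neq\emptyset$, induction gives a matching $M_2$ of $H_2$ saturating $L(H)\setminus A$. As $M_1$ and $M_2$ live on disjoint vertex sets, $M_1\cup M_2$ is a matching of $H$ saturating $L(H)$, closing the induction.

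The only real obstacle is the verification in Case 2 that the Hall condition survives passage to the leftover graph $H_2$. The subtlety is that the relevant neighbourhood is $N_H(A\cup B)\setminus N_H(A)$ and not $N_H(B)\setminus N_H(A)$: a vertex of $B$ may send edges into the deleted set $N_H(A)$, and it is precisely the tightness $|N_H(A)|=|A|$ that makes the counting come out correctly. Everything else is routine bookkeeping about induced subgraphs and neighbourhoods.
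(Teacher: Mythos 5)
Your proof is correct and complete: it is the standard Halmos--Vaughan induction, with the one delicate point (that the Hall condition survives deletion of a tight set $A$ together with $N_H(A)$) handled properly via the identity $N_{H_2}(B)=N_H(A\cup B)\setminus N_H(A)$ and the tightness $|N_H(A)|=|A|$. The paper does not prove this theorem but merely cites it (to Diestel, Theorem~2.1.2), and the cited textbook proof is essentially the same case split on slack versus tight subsets, so there is nothing further to reconcile.
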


We make use of the following (polygamous) consequence of Hall's Marriage Theorem:
\begin{cor}\label{d_hall}
  Let $d\ge 1$ be an integer and let $H$ be a bipartite graph with the property that $|N_H(A)|\ge d|A|$ for each $A\subseteq L(H)$.  Then $H$ contains a subgraph $M$ such that $\deg_M(v)=d$ for each $x\in L(H)$ and $\deg_M(y)\le 1$ for each $y\in R(H)$.
\end{cor}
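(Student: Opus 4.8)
The plan is to deduce \cref{d_hall} from Hall's Marriage Theorem (\cref{hall}) by the standard device of blowing up each left vertex into $d$ twin copies. Define a bipartite graph $H'$ by setting $R(H') := R(H)$ and $L(H') := \{x^{(1)},\dots,x^{(d)} : x \in L(H)\}$, where each copy $x^{(i)}$ is made adjacent in $H'$ to precisely the vertices of $N_H(x)$. For a set $B \subseteq L(H')$, write $\pi(B) \subseteq L(H)$ for the set of original vertices having at least one copy in $B$; then by construction $N_{H'}(B) = N_H(\pi(B))$.

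Next I would verify that $H'$ satisfies the hypothesis of \cref{hall}. Fix $B \subseteq L(H')$ and put $A := \pi(B)$. Since $B$ contains at most $d$ copies of each vertex of $A$, we have $|B| \le d|A|$; and since $N_{H'}(B) = N_H(A)$, the assumption of the corollary gives $|N_{H'}(B)| = |N_H(A)| \ge d|A| \ge |B|$. So Hall's condition holds for $H'$, and \cref{hall} produces a matching $M'$ in $H'$ saturating $L(H')$.

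Finally I would project $M'$ back down to $H$: let $M$ be the spanning subgraph of $H$ whose edge set is $\{xy : x^{(i)}y \in E(M')$ for some $i \in \{1,\dots,d\}\}$. Because $M'$ saturates $L(H')$, each of the $d$ distinct copies of a vertex $x \in L(H)$ is matched by $M'$, and — $M'$ being a matching — these copies are matched to $d$ distinct vertices of $R(H)$; hence $\deg_M(x) = d$ for every $x \in L(H)$. Conversely, each $y \in R(H) = R(H')$ lies on at most one edge of the matching $M'$, hence on at most one edge of $M$, so $\deg_M(y) \le 1$. Thus $M$ is the required subgraph. There is no real obstacle in this argument; the only point deserving a moment's care is precisely that the $d$ copies of a single left vertex are matched to $d$ \emph{distinct} right vertices, and this is immediate since the copies are distinct vertices of $L(H')$ and $M'$ is a matching.
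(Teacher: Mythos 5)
Your proof is correct and follows exactly the route the paper sketches: blow each vertex of $L(H)$ up into $d$ twins, check Hall's condition for the blown-up graph, apply \cref{hall}, and project the resulting matching back down. The paper only states this in one sentence, so your fuller verification (in particular that the $d$ copies are matched to $d$ distinct right vertices) is a faithful expansion of the same argument.
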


\cref{d_hall} can be deduced from \cref{hall} by adding $d-1$ \defin{twins} $x_2,\ldots,x_d$ for each vertex $x\in L(H)$, i.e., by applying \cref{hall} on the graph $H':=H\cup\{x_iy:x\in L(H),\, i\in\{2,\ldots,d\},\, y\in N_H(x)\}$.

Late in the game, we will make use of the following asymmetric version of the Lovász Local Lemma (see, for example, \citet[Lemma~5.1.1]{alon.spencer:probabilistic}):

\begin{lem}\label{weighted_lovasz}
  Let $\mathcal{E}:=\{E_1,\ldots,E_n\}$ be a set of events in some probability space $(\Omega,\Pr)$.  For each $i\in\{1,\ldots,n\}$, let $\Gamma_i\subseteq \mathcal{E}$ be such that the event $E_i$ is mutually independent of $\mathcal{E}\setminus \Gamma_i$,\footnote{An event $A$ is mutually independent of a set $\{B_1,\ldots,B_r\}$ of events if, for any disjoint sets $I,J\subseteq\{1,\ldots,r\}$, $\Pr(A\cap\bigcap_{i\in I} B_i\cap\bigcap_{j\in J} \overline{B}_j=\Pr(A)\Pr(\bigcap_{i\in I} B_i\cap\bigcap_{j\in J} \overline{B}_j)$.} and let $w:\mathcal{E}\to[0,1)$ be such that
  \[
      \Pr(E_i) \le w(E_i)\cdot\prod_{E_j\in\Gamma_i}(1-w(E_j))  \enspace ,
  \]
  for each $i\in\{1,\ldots,n\}$.
  Then $\Pr(\overline{E}_1\cap\cdots\cap\overline{E}_n) > 0$.
\end{lem}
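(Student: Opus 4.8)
The plan is to follow the standard inductive argument for the local lemma. Everything reduces to the following claim: for every $i\in\{1,\ldots,n\}$ and every $S\subseteq\{1,\ldots,n\}\setminus\{i\}$ with $\Pr(\bigcap_{j\in S}\overline{E}_j)>0$, we have $\Pr\bigl(E_i\mid\bigcap_{j\in S}\overline{E}_j\bigr)\le w(E_i)$. I would prove this claim by induction on $|S|$. The base case $S=\varnothing$ is immediate, since
\[
  \Pr(E_i)\le w(E_i)\cdot\prod_{E_j\in\Gamma_i}(1-w(E_j))\le w(E_i)\enspace,
\]
because each factor $1-w(E_j)$ lies in $(0,1]$.

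For the inductive step I would partition $S$ into $S_1:=\{j\in S:E_j\in\Gamma_i\}$ and $S_2:=S\setminus S_1$, and start from the identity
\[
  \Pr\Bigl(E_i\ \Big|\ \bigcap_{j\in S}\overline{E}_j\Bigr)
   =\frac{\Pr\bigl(E_i\cap\bigcap_{j\in S_1}\overline{E}_j \ \big|\ \bigcap_{j\in S_2}\overline{E}_j\bigr)}
          {\Pr\bigl(\bigcap_{j\in S_1}\overline{E}_j \ \big|\ \bigcap_{j\in S_2}\overline{E}_j\bigr)}\enspace,
\]
which is legitimate because $\bigcap_{j\in S}\overline{E}_j\subseteq\bigcap_{j\in S_2}\overline{E}_j$ forces the conditioning event to have positive probability. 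For the numerator I would discard the $\overline{E}_j$ with $j\in S_1$ and then invoke mutual independence of $E_i$ from $\{E_j:j\in S_2\}\subseteq\mathcal{E}\setminus\Gamma_i$, obtaining the bound $\Pr(E_i)\le w(E_i)\prod_{E_j\in\Gamma_i}(1-w(E_j))$. For the denominator I would enumerate $S_1=\{j_1,\ldots,j_r\}$, expand $\Pr\bigl(\bigcap_{l=1}^r\overline{E}_{j_l}\mid\bigcap_{j\in S_2}\overline{E}_j\bigr)$ as a telescoping product of $r$ conditional probabilities, and apply the induction hypothesis to each factor — each relevant conditioning set has at most $(r-1)+|S_2|<|S|$ indices, and each such event contains $\bigcap_{j\in S}\overline{E}_j$ and hence has positive probability — to get the lower bound $\prod_{l=1}^r(1-w(E_{j_l}))$. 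Dividing, the factors indexed by $S_1$ cancel against part of $\prod_{E_j\in\Gamma_i}(1-w(E_j))$, and since $\{E_{j_1},\ldots,E_{j_r}\}\subseteq\Gamma_i$ the leftover product is at most $1$, leaving $\Pr(E_i\mid\bigcap_{j\in S}\overline{E}_j)\le w(E_i)$, as required.

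Finally I would chain the claim along a fixed ordering: an easy induction shows each prefix event $\overline{E}_1\cap\cdots\cap\overline{E}_{i-1}$ has positive probability, so applying the claim with $S=\{1,\ldots,i-1\}$ gives $\Pr\bigl(\overline{E}_i\mid\bigcap_{j<i}\overline{E}_j\bigr)\ge 1-w(E_i)>0$, and therefore
\[
  \Pr(\overline{E}_1\cap\cdots\cap\overline{E}_n)
   =\prod_{i=1}^{n}\Pr\Bigl(\overline{E}_i\ \Big|\ \bigcap_{j<i}\overline{E}_j\Bigr)
   \ge\prod_{i=1}^{n}\bigl(1-w(E_i)\bigr)>0\enspace.
\]
I do not expect any conceptual obstacle here — this is a textbook argument. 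The only real work is bookkeeping: keeping conditioning sets small enough for the induction to apply, checking that every event one conditions on (or divides by) has positive probability, and carefully distinguishing "events lying in $\Gamma_i$" from "indices lying in $S_1$" when matching up the product terms. That careful bookkeeping, rather than any new idea, is where the effort goes.
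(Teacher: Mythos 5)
Your proof is correct and is precisely the standard inductive argument for the asymmetric Local Lemma; the paper itself does not prove \cref{weighted_lovasz} but simply cites it (Alon--Spencer, Lemma~5.1.1), and your argument is essentially the one found there. All the delicate points — positivity of the conditioning events, the strict decrease of the conditioning set in the telescoping product, and the cancellation of the $S_1$ factors against a subset of $\prod_{E_j\in\Gamma_i}(1-w(E_j))$ — are handled correctly.
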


\section{The Linear Chromatic Number of Pseudogrids}

For positive integers $a$ and $b$, the \defin{$a\times b$ grid} $G_{a\times b}$ is the graph with vertex set $V(G_{a\times b}):=\{1,\ldots,a\}\times\{1,\ldots,b\}$ and that contains an edge with endpoints $(i_1,j_1)$ and $(i_2,j_2)$ if and only if $|i_1-i_2|+|j_1-j_2|=1$.  Such an edge is \defin{vertical} if $i_1=i_2$ and \defin{horizontal} if $j_1=j_2$.  For each $i\in\{1,\ldots,a\}$, the \defin{$i$th column} of $G_{a\times b}$ is the vertex set $\{(i,1),\ldots,(i,b)\}$ and, for each $j\in\{1,\ldots,b\}$, the \defin{$j$th row} is the vertex set $\{(1,j),\ldots,(a,j)\}$.  For any integer $0\le r<\min\{a,b\}/2$, the \defin{r-interior} of $G_{a\times b}$ defined as $\interior_r(G_{a\times b}):=G_{a\times b}[\{1+r,\ldots,a-r\}\times\{1+r,\ldots,b-r\}]$.  For $r\ge\min\{a,b\}/2$, $\interior_r(G_{a\times b})$ is the empty graph.

\begin{figure}
  \begin{center}
    \includegraphics{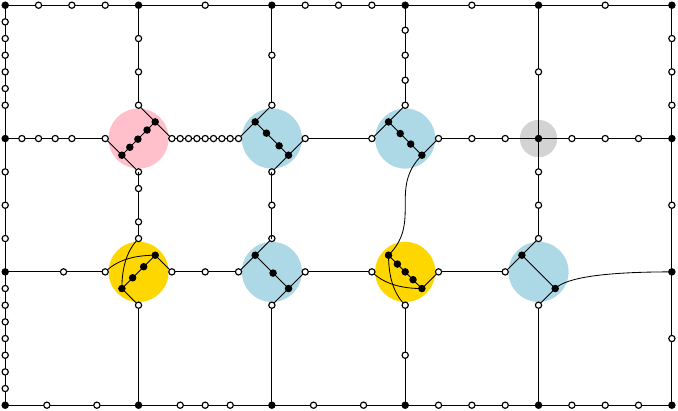}
  \end{center}
  \caption{A $6\times 4$ pseudogrid.  Black vertices are contained in $P_v$ for some vertex $v$ of $G_{6\times 4}$ and white vertices are contained in $P_{vw}$ for some edge of $G_{6\times 4}$. Colour are used to distinguish between cases (Q1) (blue), (Q2) (pink), and (Q3) (gold).}
  \label{pseudogrid_fig}
\end{figure}

Refer to \cref{pseudogrid_fig}.  An \defin{$a\times b$-pseudogrid} is any graph that can be obtained from $G_{a\times b}$ in the following way:
\begin{compactitem}
  \item Replace each degree-$4$ vertex $v$ whose neighbours (in counterclockwise order, starting with the neighbour $w$ above $v$) are $w,x,y,z$ with a non-empty path $P_v$. If $P_v$ has only one vertex then the unique vertex of $P_v$ is adjacent to each of $w,x,y,z$ and this has no effect on the underlying graph.  Otherwise, $P_v$ has two endpoints $p$ and $q$, each of which is adjacent to two vertices among $w,x,y,z$.  It is useful to consider three possible cases, each of which appears at least once in \cref{pseudogrid_fig}:
  \begin{compactenum}[(Q1)]
    \item \label{q_i} $p$ is adjacent to $\{w,x\}$ and $q$ is adjacent to $\{y,z\}$;
    \item \label{q_ii} $p$ is adjacent to $\{x,y\}$ and $q$ is adjacent to $\{w,z\}$; or
    \item \label{q_iii} $p$ is adjacent to $\{w,y\}$ and $q$ is adjacent to $\{x,z\}$.
  \end{compactenum}
  \item At this point, each edge $vw$ of $G_{a\times b}$ has a corresponding edge $v'w'$ in the modified graph, and we replace $v'w'$ with a path $\overline{P}_{vw}$ whose endpoints are $v'$ and $w'$.  (In other words, $\overline{P}_{vw}$ is a path obtained by subdividing the edge $v'w'$ zero or more times.)
\end{compactitem}

Let $G$ be an $a\times b$ pseudogrid.  For an edge $vw$ of $G_{a\times b}$, we let $P_{vw}:=\overline{P}_{vw}-\{v,w\}$ denote the (possibly empty) subpath containing the internal vertices of $\overline{P}_{vw}$.  For each vertex $v$ of $G_{a\times b}$ of degree less than $4$ we define $P_{v}$ to be the $1$-vertex path that contains only $v$.  In this way, $\mathcal{P}:=\{V(P_\mu):\mu\in \VE(G_{a\times b})\}$ is a partition of $V(G)$ into induced paths.  We call $\mathcal{P}$ a \defin{grid-partition} of $G$.
The \defin{$r$-interior} of $G$ is $\interior_r(G):=\bigcup_{\mu\in\VE(\interior_r(G_{a\times b}))} V(P_\mu)$.

Each row $R':=v_1,\ldots,v_a$ of $G_{a\times b}$ corresponds naturally to a path $R$ of $G$. The path $R$ contains $V(\overline{P}_{v_iv_{i+1}})$ for each $i\in\{1,\ldots,a-1\}$.  However, for $i\in\{2,\ldots,a-1\}$ $R$ may or may not contain $V(P_{v_i})$.  In particular, if $P_{v_i}$ was created using (Q\ref{q_iii}) then $R$ does not contain any internal vertices in $P_{v_i}$. Similarly, a column $C':=v_1,\ldots,v_b$ of $G_{a\times b}$ corresponds to a path $C$  in $G$ that contains $V(\overline{P}_{v_jv_{j+1}})$ for each $j\in\{1,\ldots,b-1\}$. This correspondence allows us to talk about the \defin{rows} and \defin{columns} of $G$, which we will do immediately.

As part of our proof, we use the operation of \defin{deleting} a row (or column) of $G$.  To delete a row $R$ of $G$ that corresponds to the row $R':=v_1,\ldots,v_a$ in $G_{a\times b}$, we remove the edges of $\overline{P}_{v_{i}v_{i+1}}$ for each $i\in\{2,\ldots,a\}$.  If this produces vertices of degree $1$ (which happens when $R$ is the first or last row of $G$ or when $R=v_1,\ldots,v_r$ does not contains $P_{v_i}$ for some $i\in\{1,\ldots,r\}$) then we repeatedly remove vertices of degree at most $1$ until none remain.  If $G$ is an $a\times b$ pseudogrid and we delete some row $R$, then the resulting graph is an $a\times (b-1)$ pseudogrid.  Similarly, if we delete column $C$ of $G$, then the resulting graph is a $(a-1)\times b$ pseudogrid.

\subsection{Proof Outline}

If some graph contains a $k\times k$ grid minor then it contains a $k\times k$ pseudogrid as a subgraph \cite{kun.obrien.ea:polynomial}.  Therefore to prove \cref{kun_obrien_general2}, it suffices to establish \cref{pseudogrid_lower_bound}.  We do this by showing that, for sufficiently small $\epsilon >0$, any $\epsilon k$-colouring of any $k\times k$ pseudogrid $G$ contains an uncentred path $P$. We prove the existence of $P$ in several steps; see \cref{outline}:

\begin{figure}
  \begin{center}
    \begin{tabular}{ccc}
      \includegraphics{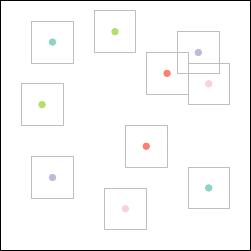} &
      \includegraphics{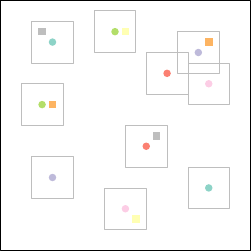} &
      \includegraphics{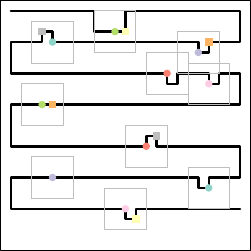} \\
      $S_1$ & $S_1\cup S_2$ & $P$
    \end{tabular}
  \end{center}
  \caption{Constructing the sets $S_1$ and $S_2$ and the path $P$.}
  \label{outline}
\end{figure}
\begin{compactenum}

  \item We first delete rows and columns from $G$ so that each colour that appears in $G$ appears at least $d$ times in the interior of $G$, for some large constant $d$.  From this point on the goal is to construct $P$ so that it contains each remaining colour at least twice.

  \item We greedily choose a set $S_1$ of vertices in $G$ that contains two vertices of each colour and that is `well-separated' in the sense that the corresponding set of vertices/edges in $G_{k\times k}$ have a minimum distance between them.  In \cref{outline} this minimum distance corresponds to the fact that the box drawn centred at each vertex in $S_1$ contains no other vertices of $S_1$.
  Unfortunately, this process can fail for some subset of the colours that appear in $G$.

  \item For these failed colours, we use \cref{d_hall} (the Polygamous Marriage Theorem) and \cref{weighted_lovasz} (the Lovász Local Lemma) to identify a set $S_2$ that contains two vertices of each of the missing colours and such that no vertex of $S_2$ is close to any other vertex of $S_2$ and each vertex of $S_1$ is close to at most one vertex of $S_2$.

  \item We construct a path $P$ that contains each vertex in $S_1\cup S_2$.  This is possible because each vertex in $S_1$ is `close to' at most one vertex of $S_2$ and vice-versa.  Aside from these pairs, no pair of vertices is close to each other.
\end{compactenum}

The most challenging aspect of this proof is the construction of $S_2$, which requires the use of the Local Lemma (\cref{weighted_lovasz}) to ensure that no vertex chosen to take part in $S_2$ is close to any other vertex in $S_2$.  The difficulty is illustrated in \cref{outline} by the cluster of three points of $S_1$ in the top right corner whose boxes overlap.  These vertices of $S_1$ are well-separated, but choosing one point from each of the three boxes to take part in $S_2$ could result in three vertices of $S_2$ being very close to each other.  In particular, these three points could be vertices of $P_{\mu_1}$, $P_{\mu_2}$ and $P_{\mu_2}$ where $\mu_1$, $\mu_2$, and $\mu_3$ are objects in $\VE(G_{k\times k})$ that all contain a common vertex, making it difficult or impossible to find a single path that contains all three (see \cref{bad_examples}).

\begin{figure}
  \begin{center}
    \begin{tabular}{ccc}
      \includegraphics{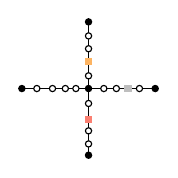} &
      \includegraphics{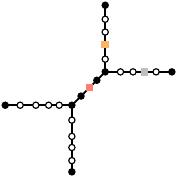} &
      \includegraphics{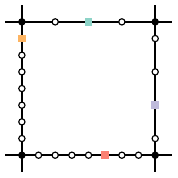}
    \end{tabular}
  \end{center}
  \caption{Some examples of $3$- and $4$-element vertex subsets that cannot all be contained in a single path (that must also contain other vertices not pictured).}
  \label{bad_examples}
\end{figure}

To avoid this, we take a random permutation $\pi$ of the vertices in $S_1$ and process them in order.  When a vertex is processed it \defin{claims} all the unmarked vertices in its box and then marks all the vertices in a larger box so that no subsequent vertex in $S_1$ is able to claim them.  The Local Lemma is then used to show that with positive probability, for each missing colour $\alpha$, there are many vertices of $S_1$ that claim a vertex of colour $\alpha$.  Finally, the Marriage Theorem is then used to show that we can find a matching between vertices in $S_1$ and claimed vertices so that each vertex of $S_1$ that takes part in the matching is matched with a vertex it claims and, for each missing colour $\alpha$, there are two vertices of colour $\alpha$ that take part in the matching.

The rest of this section expands the preceding sketch into a complete proof of \cref{pseudogrid_lower_bound}. In \cref{only_frequent_sec} we explain how to remove rows and columns of $G$ to obtain a sub-pseudogrid in which every colour appears frequently.  In \cref{path_finding} we explain how, given a well-separated set $S$ of vertices in $G$, to find a path that contains every vertex in $S$.  In \cref{packing_lemma_sec} we introduce a fairly standard-looking packing lemma that is needed in several places later.  In \cref{well_separated} we explain how to find a well-separated set $S$ that contains two vertices of each colour. Finally, \cref{wrapping_up} shows how to assemble these various pieces to prove \cref{pseudogrid_lower_bound}.

\subsection{Pseudogrids with Exclusively Frequent Colours}
\label{only_frequent_sec}

For the sake of compactness, let $G_k:=G_{k\times k}$.
Let $G$ be a $k\times k$ pseudogrid with grid partition $\mathcal{P}:=\{P_\mu:\mu\in\VE(G_{k})\}$ and let $\varphi:V(G)\to\{1,\ldots,c\}$ be a vertex colouring of $G$.  The partition $\mathcal{P}$ associates each vertex of $G$ with an edge or vertex $\mu$ of $G_k$, so $\varphi$ associates a colour set with each object in $\VE(G_k)$, as follows.  For each $\mu\in\VE(G_{k})$, we let $\varphi_{\mathcal{P}}(\mu):=\{\varphi(v):v\in V(P_\mu)\}$.  For each colour $\alpha\in\{1,\ldots,c\}$, define $\varphi^{-1}(\alpha):=\{v\in V(G):\varphi(v)=\alpha\}$ and define $\varphi_\mathcal{P}^{-1}(\alpha):=\{\mu\in \VE(G_{k\times k}):\alpha\in\varphi_\mathcal{P}(\mu)\}$.  For any colour set $A\subseteq\{1,\ldots,c\}$ define $\varphi^{-1}(A):=\bigcup_{\alpha\in A}\varphi^{-1}(\alpha)$ and $\varphi_\mathcal{P}^{-1}(A):=\bigcup_{\alpha\in A}\varphi_{\mathcal{P}}^{-1}(\alpha)$.  Throughout this section, we will use the idiom $\varphi(V(G))$ to denote the set of all colours used by $\varphi$ to colour the vertices of $G$.

The following lemma gives conditions that allow us to delete rows and columns from $G$ to obtain a sub-pseudogrid in which every colour occurs frequently in the interior.

\begin{lem}\label{only_frequent}
  Let $d,k,r\ge 1$ be integers, let $G$ be a $k\times k$ pseudogrid and let $\varphi$ be a vertex colouring of $G$ that uses $|\varphi(V(G))|\le k/(d+2r)$ colours.
  Then $G$ contains a $k'\times k'$ pseudogrid $G'$ with $k'\ge k - (d+2r)|\varphi(V(G))|$ that has a grid-partition $\mathcal{P}':=\{V(P'_\mu):\mu\in \VE(G_{k'})\}$ such that
  for any $A\subseteq \varphi(V(G'))$, $|\varphi_{\mathcal{P}'}^{-1}(A)\cap \interior_r(G_{k'})| \ge d|A|$.
\end{lem}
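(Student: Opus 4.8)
The plan is to delete rows and columns from $G$ one at a time, always targeting a row or column that is ``cheap'' with respect to some colour that is currently too rare, until every remaining colour is frequent enough in the interior. Concretely, call a colour $\alpha\in\varphi(V(G))$ \emph{bad} if $|\varphi_{\mathcal{P}}^{-1}(\alpha)\cap\interior_r(G_k)|<d$, i.e., fewer than $d$ objects of $G_k$ whose part contains an $\alpha$-vertex lie in the $r$-interior. As long as some colour is bad, I want to exhibit a row or column whose deletion removes that colour entirely, or at least makes progress; the point is that a bad colour is witnessed by at most $d+2r$ ``relevant'' rows (the at most $d$ rows of $G_k$ meeting its $r$-interior witnesses, plus the $2r$ boundary rows that define the interior), and similarly at most $d+2r$ relevant columns, so deleting these removes the colour from consideration.

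First I would set up the deletion operation carefully: recall from the preliminaries that deleting a row of a $k\times k$ pseudogrid yields a $k\times(k-1)$ pseudogrid (and likewise for columns), and that this operation only removes vertices, so it never creates new colours and never increases the count $|\varphi_{\mathcal{P}}^{-1}(\alpha)\cap\interior_r|$ for a colour $\alpha$ on surviving vertices in a way we cannot control --- in fact the interior can only shift, and I need to track that the surviving colour's count in the new interior does not drop below what a clean accounting predicts. The key bookkeeping step: for each colour $\alpha$, since $|\varphi_{\mathcal{P}}^{-1}(\alpha)\cap\interior_r(G_k)|<d$ when $\alpha$ is bad, there is a set of at most $d$ rows of $G_k$ that together cover all interior witnesses of $\alpha$; adding the top $r$ and bottom $r$ rows, deleting these at most $d+2r$ rows destroys every interior occurrence of $\alpha$. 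Do the same with columns. Iterating over all bad colours, we delete at most $(d+2r)$ rows and $(d+2r)$ columns per colour, and we must ensure a colour, once handled, stays handled --- which holds because once all interior $\alpha$-vertices are gone, no sub-pseudogrid interior can contain an $\alpha$-vertex.

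After processing, set $G'$ to be the resulting $k'\times k'$ pseudogrid with the inherited grid-partition $\mathcal{P}'$. We have deleted at most $(d+2r)$ rows per bad colour, hence at most $(d+2r)|\varphi(V(G))|$ rows total, so $k'\ge k-(d+2r)|\varphi(V(G))|$, and symmetrically for columns; since the number of colours is at most $k/(d+2r)$, this is nonnegative and $G'$ is a genuine pseudogrid. By construction every surviving colour $\alpha\in\varphi(V(G'))$ is \emph{not} bad in $G'$, meaning $|\varphi_{\mathcal{P}'}^{-1}(\alpha)\cap\interior_r(G_{k'})|\ge d$. The statement we want is the ``$d$-fold'' version over all $A\subseteq\varphi(V(G'))$: since the sets $\varphi_{\mathcal{P}'}^{-1}(\alpha)\cap\interior_r(G_{k'})$ for distinct colours $\alpha$ need \emph{not} be disjoint (an object $\mu$ can carry several colours), I cannot just sum; instead I apply Hall-type reasoning, building the bipartite graph with $L=\varphi(V(G'))$, $R=\VE(\interior_r(G_{k'}))$, and an edge from $\alpha$ to $\mu$ when $\alpha\in\varphi_{\mathcal{P}'}(\mu)$. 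Here $|N(A)|=|\varphi_{\mathcal{P}'}^{-1}(A)\cap\interior_r(G_{k'})|$, and what we actually want to prove is precisely $|N(A)|\ge d|A|$ --- so the Hall machinery is not needed to \emph{conclude} the lemma, it is the conclusion.

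The main obstacle, and the step I would spend the most care on, is exactly this non-disjointness / ``expansion for every $A$'' requirement rather than merely ``$d$ witnesses per colour.'' Knowing each colour individually has $\ge d$ interior witnesses is weaker than $|\varphi_{\mathcal{P}'}^{-1}(A)\cap\interior_r|\ge d|A|$ for all $A$. To get the stronger statement I would iterate the deletion more aggressively: say $A$ is \emph{deficient} if $|\varphi_{\mathcal{P}'}^{-1}(A)\cap\interior_r|<d|A|$; a minimal deficient set $A$ has the property that every proper subset expands, and one shows (standard Hall-deficiency argument) that one can identify a small set of rows/columns --- at most $d|A|+2r$ rows, which amortizes to $d+2r$ rows per colour in $A$ --- whose deletion removes every colour of $A$ from the interior. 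Choosing a minimal deficient $A$ at each step and deleting these rows and columns, the per-colour deletion budget is preserved, so the bound on $k'$ is unchanged, and termination follows because each step strictly decreases $|\varphi(V(\cdot))|$ restricted to colours with any interior occurrence. When no deficient set remains, the desired inequality $|\varphi_{\mathcal{P}'}^{-1}(A)\cap\interior_r(G_{k'})|\ge d|A|$ holds for every $A\subseteq\varphi(V(G'))$, completing the proof.
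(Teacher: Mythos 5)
Your final paragraph is essentially the paper's proof: repeatedly pick a deficient set $A$ with $|\varphi_{\mathcal{P}}^{-1}(A)\cap\interior_r(G_k)|<d|A|$, delete the at most $d|A|$ witness rows/columns plus the $2r$ boundary rows/columns (so at most $(d+2r)|A|$ in total, amortizing to $d+2r$ per eliminated colour), and recurse until no deficient set remains. You correctly identified that the per-colour version sketched in your first paragraphs is too weak and that the deletion must be driven by deficient \emph{sets}; the appeal to minimality of $A$ and to Hall-type machinery in the last step is unnecessary (any deficient set works, and the deficiency bound itself caps the number of witness rows), but harmless.
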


\begin{proof}
  The proof is by induction on $|\varphi(V(G))|$, the number of colours used by the colouring $\varphi$.  If there exists no $A\subseteq \varphi(V(G))$ with $|\varphi_{\mathcal{P}}^{-1}(A)\cap\interior_r(G_k)| < d|A|$ then taking $G':=G$ and $\mathcal{P}':=\mathcal{P}$ satisfies the requirements of the lemma.  Otherwise, fix such a set $A$. We will remove a set $R$ of rows and a set $C$ of columns from $G$ with $|R|=|C|\le d|A|+2r \le (d+2r)|A|$ to eliminate all vertices with colours in $A$, as follows:
  \begin{compactitem}
    \item For each vertex $v:=(i,j)\in V(\interior_r(G_{k}))$ with $A\cap\varphi_\mathcal{P}(v)\neq\emptyset$, we include row $j$ in $R$ and column $i$ in $C$.
    \item For each horizontal edge $vw\in E(\interior_r(G_{k}))$ with $A\cap\varphi_\mathcal{P}(vw)\neq\emptyset$, we include the row $R$ of $G$ that contains $P_{vw}$.
    \item For each vertical edge $vw\in E(\interior_r(G_{k}))$ with $A\cap\varphi_\mathcal{P}(vw)\neq\emptyset$, we include the column $C$ of $G$ that contains $P_{vw}$.
    \item We add the first and last $r$ rows to $R$ and the first and last $r$ columns to $C$.
    \item Finally, we add arbitrary rows to $R$ or columns to $C$ to ensure that $|R|=|C|$.
  \end{compactitem}
  At this point $|R|=|C|\le (d+2r)|A|$ and we remove all rows in $R$ and all columns in $C$ from $G$ to obtain a $k_0\times k_0$ pseudogrid $G_0$ with $k_0\ge k-(d+2r)|A|$ and such that $\varphi(V(G_0))\cap A=\emptyset$.  In particular, $|\varphi(V(G_0))|\le |\varphi(V(G))|-|A|$.

  Now apply induction on $G_0$ to get a $k'\times k'$ pseudogrid with
  \[
    k'\ge k_0-(d+2r)|\varphi(V(G_0))|
      \ge k-(d+2r)|A|-(d+2r)|\varphi(V(G_0))|
      \ge k - (d+2r)|\varphi(V(G))|
  \]
  that satisfies the conditions of the lemma.
\end{proof}

Having each object $\mu\in\VE(G_k)$ associated with a set $\varphi_\mathcal{P}(\mu)$ of colours rather than a single colour is problematic for what we want to to do next.  The following lemma allows us to choose one representative colour $\phi(\mu)$ from $\varphi_{\mathcal{P}}(\mu)$ for each $\mu\in \VE(G_k)$ while still ensuring that each colour appears frequently.

\begin{lem}\label{one_colour_per_object}
  Let $d,r>1$ be integers, let $G$ be a $k\times k$ pseudogrid with grid-partition $\mathcal{P}$, and let $\varphi$ be a vertex colouring of $G$ such that, for any $A\subseteq\varphi(V(G))$, $|\varphi_{\mathcal{P}}^{-1}(A)\cap\interior_r(G_k)| \ge d|A|$. Then there exists a colouring $\phi:\VE(G_{k})\to \varphi(V(G))\cup\{\perp\}$ with the following properties:
  \begin{compactenum}[(i)]
    \item $\phi(\mu)=\perp$ for each $\mu\not\in\VE(\interior_r(G_{k}))$;
    \item $\phi(\mu)=\perp$ or $\phi(\mu)\in\varphi_\mathcal{P}(\mu)$ for each $\mu\in\VE(G_{k})$; and
    \item $|\phi^{-1}(\alpha)|\ge d$ for each $\alpha\in\varphi(V(G))$.
  \end{compactenum}
\end{lem}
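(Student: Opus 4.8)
The plan is to build the colouring $\phi$ by a single application of the Polygamous Marriage Theorem (\cref{d_hall}). Set up a bipartite graph $H$ with $L(H):=\varphi(V(G))$ (the colours) and $R(H):=\VE(\interior_r(G_k))$ (the objects of $G_k$ that lie in the $r$-interior), placing an edge between a colour $\alpha$ and an object $\mu$ whenever $\alpha\in\varphi_{\mathcal{P}}(\mu)$, i.e.\ whenever some vertex of $P_\mu$ receives colour $\alpha$. The hypothesis of the lemma says precisely that $|N_H(A)| = |\varphi_{\mathcal{P}}^{-1}(A)\cap\interior_r(G_k)| \ge d|A|$ for every $A\subseteq L(H)$, so \cref{d_hall} applies and yields a subgraph $M$ with $\deg_M(\alpha)=d$ for each colour $\alpha\in L(H)$ and $\deg_M(\mu)\le 1$ for each object $\mu\in R(H)$.

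Now define $\phi$ directly from $M$: for $\mu\in\VE(\interior_r(G_k))$, if $\mu$ is matched in $M$ to some colour $\alpha$ then set $\phi(\mu):=\alpha$, and otherwise set $\phi(\mu):=\perp$; for $\mu\notin\VE(\interior_r(G_k))$ set $\phi(\mu):=\perp$. I then check the three required properties. Property~(i) holds by construction. Property~(ii) holds because an edge $\alpha\mu$ of $M\subseteq H$ only exists when $\alpha\in\varphi_{\mathcal{P}}(\mu)$, and $\deg_M(\mu)\le 1$ guarantees $\phi$ is well defined (at most one colour is assigned to each $\mu$). Property~(iii) — which should read $|\phi^{-1}(\alpha)|\ge d$ for each $\alpha\in\varphi(V(G))$ — holds because $\deg_M(\alpha)=d$ means exactly $d$ distinct objects $\mu$ are matched to $\alpha$, and each such $\mu$ has $\phi(\mu)=\alpha$; since distinct objects are distinct elements of $\VE(G_k)$, these contribute $d$ to $|\phi^{-1}(\alpha)|$.

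There is essentially no hard step here: the whole content is recognising that the frequency hypothesis is exactly Hall's expansion condition with expansion factor $d$, and that the ``$\deg_M(\mu)\le 1$'' side of \cref{d_hall} is what makes $\phi$ a well-defined function on objects rather than a relation. The only things to be careful about are bookkeeping: confirming that $\varphi(V(G))=\varphi(V(G'))$-style set equalities are not needed (we work with the fixed colouring $\varphi$ throughout), and noting the typos in the statement of property~(iii) (the quantifier should be over colours $\alpha$, not objects $\mu$) and property~(i) (the stray ``For each''), which I would silently correct in the write-up. No use of the Local Lemma or of the row/column deletion machinery is required for this lemma.
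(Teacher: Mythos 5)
Your proof is correct and is essentially identical to the paper's: the same bipartite graph between colours and objects of $\VE(\interior_r(G_k))$, the same application of \cref{d_hall}, and the same definition of $\phi$ from the resulting subgraph $M$. Your added remarks (well-definedness from $\deg_M(\mu)\le 1$, and the typo corrections in properties (i) and (iii)) are accurate.
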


\begin{proof}
  Consider the bipartite graph $H$ with parts $X:=\varphi(V(G))$ and $Y:=\VE(\interior_r(G_k))$ and edge set
  \[
    E(H) := \{ (\alpha,\mu)\in X\times Y: \alpha\in\varphi_\mathcal{P}(\mu) \}
  \]
  By \cref{d_hall}, $H$ contains a subgraph $M$ with $\deg_M(\alpha)=d$ for each $\alpha\in X$ and $\deg_M(\mu)\le 1$ for each $\mu\in Y$.  For each edge $\alpha\mu$ in $M$, set $\phi(\mu):=\alpha$.  This defines $\phi(\mu)$ for any $\mu\in Y$ with $\deg_M(\mu)=1$.  For each $\mu\in Y$ with $\deg_M(\mu)=0$ set $\phi(\mu):=\perp$.
\end{proof}

\subsection{Finding Paths Through Well-Separated Pairs}
\label{path_finding}

Next we show that, given a sufficiently `well-separated' set $S$ of pairs of vertices in $G$, we can always find a path in $G$ that contains every vertex in $S$.
We base our definition of `well-separated' on the concept of boxes, which we now define.

The \defin{$r$-box} centred at a vertex $v:=(i,j)$ of $G_{k}$ is defined as
\[
  B_r(v) := \{i-r,\ldots,i+r\}\times\{j-r,\ldots,j+r\} \cap V(G_{k}) \enspace .
\]
The \defin{$r$-box} centred at an edge $vw$ of $G_{k}$ is $B_r(vw):=B_r(v)\cup B_r(w)$.\footnote{Technically the notations for $B_r(v)$ and $B_r(\mu)$ should include the value of $k$, but we omit this since there will never be any ambiguity.}
For any $\mu\in\VE(G_k)$, the $r$-box $B_r(\mu)$ defines an induced subgraph that we denote by $G_r(\mu):=G_{k}[B_r(\mu)]$.  Straightforward counting shows that, for any $\mu\in\VE(G_k)$,
\[
   |B_r(\mu)| \le |V(G_{2(r+1)\times (2r+1)})| = 2(r+1)(2r+1) = 4r^2+6r+2
\]
and
\begin{equation}
   |\VE(G_r(\mu))| \le |\VE(G_{2(r+1)\times (2r+1)})| = 12r^2+14r+3 \enspace . \label{rbox_size}
\end{equation}
For convenience, we define $\vol{r}:=12r^2+14r+3$ and the important thing to  keep in mind is that $\vol{r} \in\Theta(r^2)$.

We extend these definitions to vertices of a $k\times k$ pseudogrid $G$ with grid-partition $\mathcal{P}:=\{V(P_\mu):\mu\in\VE(G_k)\}$ as follows. For any $\mu\in\VE(G_k)$, define
\[
   \tilde{B}_r(\mu) := \bigcup_{\nu\in \VE(G_r(\mu))} V(P_\nu)
\]
and, for any $v\in V(G)$, let $\tilde{B}_r(v):=\tilde{B}_r(\mu_v)$ where $P_{\mu_v}$ is the unique part in $\mathcal{P}$ that contains $v$.\footnote{Technically, the notation for $\tilde{B}_r(v)$ should include the partition $\mathcal{P}$, but we omit this since there will never be any ambiguity as to which partition is being used.}
For any $v\in V(G)$, define $\tilde{G}_r(v):=G[\tilde{B}_r(v)]$.

The following lemma, whose proof is a case analysis that appears in \cref{pick_up_two_proof}, is the main tool we use to build a path that contains a set of vertices that can be paired off in such a way that each pair is far from all other vertices.

\begin{lem}[restate=pickuptwo,label=pick_up_two]
  Let $G$ be an $a\times a$ pseudogrid with $a\ge 5$,  let $s,v,w,t$ be vertices of $G$ with $s$ in column $1$ of $G$, $v,w\in \interior_1(G)$, and $t$ in column $a$ of $G$.  Then $G$ contains a path $P$ with endpoints $s$ and $t$ that contains $v$ and $w$.
\end{lem}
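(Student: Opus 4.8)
The plan is to prove \cref{pick_up_two} by a direct, constructive case analysis on the pseudogrid structure, building the desired $s$--$t$ path as a concatenation of three kinds of pieces: a ``left part'' that travels from $s$ (in column $1$) into a small neighbourhood of $v$, a ``middle part'' that picks up both $v$ and $w$, and a ``right part'' that travels from the neighbourhood of $w$ out to $t$ (in column $a$). The key enabling observation is that a pseudogrid on a $k\times k$ grid contains, for every row and every column, a corresponding path (a \emph{row} or \emph{column} of $G$, as defined in the preliminaries), and these row- and column-paths interleave so that any row-path crosses any column-path. Moreover, any two ``parallel'' row-paths together with two column-paths bound a subgraph through which one can route a Hamilton-like path of a rectangular block. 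I would isolate this as a small auxiliary fact first: \emph{in an $a\times a$ pseudogrid, the union of all rows from $j_1$ to $j_2$ restricted to columns $i_1$ to $i_2$ is itself (essentially) a pseudogrid, and it admits an $s'$--$t'$ path visiting all its rows whenever $s'$ and $t'$ sit on its left and right boundary columns} — this is just a ``snake through a grid block'' argument, made slightly fiddly by the subdivided edges $P_{vw}$ and the vertex-paths $P_v$ of cases (Q1)--(Q3).

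Concretely, here is the order I would carry it out. First, reduce to the case where $v$ and $w$ lie in distinct columns and distinct rows: if they share a column or row, a mild perturbation argument (or a direct sub-case) handles it, since $v,w\in\interior_1(G)$ gives room on all four sides. Without loss of generality assume $v$ is weakly to the left of $w$. Second, route from $s$ rightward along a ``corridor'' of rows: pick the row-path $R_s$ of $G$ through $s$; follow it until it reaches the column of $v$ (it must, since $R_s$ is a path spanning columns $1$ through $a$), but before committing, I would instead route $s$ along $R_s$ only as far as needed and then turn. The cleanest packaging: let $C_v$ be the column-path through $v$ and $C_w$ the column-path through $w$; let $R_1,\dots$ be row-paths. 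Build the left part as: start at $s$, follow its row-path to column $i_v-1$ (the column just left of $v$), then go down/up the corresponding column-path to the appropriate row, then snake through the block of columns $\{i_v-1\}$ no wait — more simply, use the auxiliary block-snake fact on the columns strictly left of $\min(i_v,i_w)$ to bring $s$ to a vertex in column $i_v-1$ adjacent (in $G_{a\times a}$) to $v$; then step onto $P_{v}$'s component and traverse $P_v$ to pick up $v$. Third, from $v$ cross to $w$ through the block of columns between them (again the block-snake fact, visiting every internal column once), arriving adjacent to $w$ and traversing its component $P_w$ to pick up $w$. Fourth, symmetrically, use the block of columns strictly right of $\max(i_v,i_w)$ to route from $w$ out to $t$ in column $a$. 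Finally, stitch the three parts into one path, checking that the pieces are internally disjoint — which holds because they live in disjoint column-blocks except at the single junction columns $i_v$ and $i_w$, where the vertex-paths $P_v$, $P_w$ provide the hand-off and are used only once.

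The main obstacle I anticipate is not the large-scale routing but the local case analysis at $v$ and $w$ forced by the three configurations (Q1)--(Q3) of a subdivided degree-$4$ vertex, together with subdivided edges. When $P_v$ is a genuine path with two endpoints $p,q$, the pairing of which neighbours attach to $p$ versus $q$ differs across (Q1), (Q2), (Q3), so ``enter from the left, pick up all of $P_v$, leave to the right'' is possible only for certain incidence patterns; for the others one must enter from the left and leave \emph{downward} (or upward), which then dictates how the middle part must approach $w$. Handling this cleanly is exactly why the lemma restricts to $v,w\in\interior_1(G)$: having a full one-vertex margin around $v$ and around $w$ guarantees all four of $v$'s grid-neighbours exist in $G_{a\times a}$ and lie in $\interior_0$, so whichever pair of directions we are forced into, there is a free adjacent cell to turn through. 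I would therefore organize the proof by first stating a ``local traversal'' sub-claim — for any $v\in\interior_1(G)$ and any two of the four cardinal directions $d_1\ne d_2$, there is a path in $\tilde G_2(v)$ that enters $P_v$'s component from a column/row-neighbour in direction $d_1$, contains all of $P_v$, and exits to a neighbour in direction $d_2$ — prove it by a finite check over (Q1)--(Q3) and the two endpoint labellings, and then feed it into the global construction above. The rest is bookkeeping to confirm disjointness of the assembled pieces and that their endpoints match up.

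\medskip
\noindent\emph{(Remark: the constant $a\ge 5$ is exactly what is needed so that $\interior_1(G)$ is nonempty and the column-blocks ``left of $v$'', ``between $v$ and $w$'', and ``right of $w$'' together with the boundary columns $1$ and $a$ can all be accommodated; in the degenerate small cases one would verify the statement by hand, but the hypothesis $a\ge5$ makes this unnecessary.)}
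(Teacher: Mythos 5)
Your overall strategy --- route a snake at the level of $G_a$, with the real work concentrated in a finite case analysis over (Q1)--(Q3) at $v$ and $w$ --- is essentially the paper's, and you correctly identify the crux. But the ``local traversal'' sub-claim you propose to prove is false as stated, and you in fact contradict it two sentences earlier. A path that contains all of a part $P_{\mu_v}$ with $|V(P_{\mu_v})|\ge 2$ must traverse it contiguously from one endpoint $p$ to the other endpoint $q$, since the internal vertices of the part have no neighbours outside it; hence the path enters through an outside neighbour of $p$ and leaves through an outside neighbour of $q$. For a (Q3) (``bent'') part, $p$'s outside neighbours are both horizontal and $q$'s are both vertical, so the path is forced to turn at $\mu_v$ and the pair $(d_1,d_2)=(\text{left},\text{right})$ is unachievable; for (Q1)/(Q2) (``straight'') parts, two of the six direction pairs fail for the same reason. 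So the sub-claim must be weakened to assert only the achievable pairs for each type, and the global routing must then be shown to succeed with whichever pair is available --- which is precisely the content of the paper's case analysis: it replaces $\mu_v$ (and $\mu_w$) by one or two incident \emph{edges} of $G_a$, chosen compatibly with straight/bent, and then runs the generic edge-collecting snake.

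The second gap is the reduction ``assume $v$ and $w$ lie in distinct rows and columns.'' No perturbation is available --- $v$ and $w$ are given --- and the configurations where $\mu_w$ is an edge of $G_a$ incident to $\mu_v$, or where both objects meet the same column, are exactly the ones where your three column-blocks (left of $v$, between $v$ and $w$, right of $w$) cannot be made disjoint and where the forced exit direction at $v$ interacts with the forced entry direction at $w$ (cf.\ \cref{bad_examples}). These cases do arise in the application of the lemma, since the two marked vertices inside a box in \cref{pick_up_everything} may be adjacent, and they account for roughly half of the paper's case analysis. Neither gap is fatal --- both are repaired by actually carrying out the finite check you defer --- but as written the two steps you lean on (``any two directions work'' and ``WLOG distinct rows and columns'') are the two places where the argument would silently fail.
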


The next lemma shows how to take a well separated collection of pairs of vertices and cover them with disjoint boxes, each of which is compatible with \cref{pick_up_two}.

\begin{lem}\label{make_disjoint}
  Let $r,p$ be positive integers with $r-1\ge 4p+4$, let $G$ be a $k\times k$ pseudogrid and let $S\subseteq V(\interior_r(G))$ be such that $|\tilde{B}_{r}(v)\cap S|\le 2$ for each $v\in S$.
  Then there exists a set $X\subseteq V(G)$ such that
  \begin{compactenum}[(i)]
    \item \label{covers_s} $S\subseteq\bigcup_{x\in X}\tilde{B}_{p}(x)$;
    \item \label{two_per_box} $|\tilde{B}_{p}(x)\cap S|\le 2$ for each $x\in X$; and
    \item \label{disjoint_boxes} $\tilde{B}_{p+1}(x)\cap \tilde{B}_{p+1}(y)=\emptyset$ for each distinct $x,y\in X$.
  \end{compactenum}
\end{lem}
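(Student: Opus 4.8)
The plan is to build $X$ greedily. Process the vertices of $S$ in some order; when we reach a vertex $v\in S$ that is not yet covered by $\bigcup_{x\in X}\tilde B_p(x)$, we want to add a single new centre $x$ to $X$ whose box $\tilde B_p(x)$ picks up $v$ (and possibly the unique mate of $v$ in $S$), while keeping all the boxes $\{\tilde B_{p+1}(x):x\in X\}$ pairwise disjoint. The hypothesis $|\tilde B_r(v)\cap S|\le 2$ says that within distance $r$ of $v$ (measured in $G_k$) there is at most one other element of $S$; call it $v'$ if it exists. The point of the large gap $r-1\ge 4p+4$ is that a $p$-box is small enough that we can slide it to cover both $v$ and $v'$ when they are present, and a $(p+1)$-box around that centre still stays safely away from every element of $S$ other than $v,v'$, hence away from every previously chosen centre.

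Concretely, I would argue as follows. Let $\mu_v,\mu_{v'}\in\VE(G_k)$ be the objects of the grid partition containing $v$ and $v'$. Since $v,v'\in\tilde B_r(v)$, the objects $\mu_v,\mu_{v'}$ lie in $G_r(\mu_v)$, so in $G_k$ they are within $\ell_\infty$-distance about $2r$ of each other; more usefully, because $v'\in\tilde B_r(v)$ we get $\mu_{v'}\in\VE(G_r(\mu_v))$, and one checks that then there is a vertex $u$ of $G_k$ with $\mu_v,\mu_{v'}\in\VE(G_p(u))$ provided $p$ is at least some constant fraction of $r$ — and the inequality $r-1\ge 4p+4$ is exactly what is needed to make a $p$-box large enough to contain two objects that are within an $r$-box of one another. (If $v'$ does not exist, just take $u:=\mu_v$ when $\mu_v$ is a vertex, or either endpoint of $\mu_v$ when it is an edge.) Set $x$ to be any vertex of $P_u$ and add it to $X$; then $v\in\tilde B_p(x)$ and, if it exists, $v'\in\tilde B_p(x)$, which gives (i) and (ii): every element of $S\cap\tilde B_p(x)$ is one of $v,v'$, so at most two.

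For (iii), I need $\tilde B_{p+1}(x)\cap\tilde B_{p+1}(y)=\emptyset$ for $x\ne y$ in $X$. Suppose not: then $G_{p+1}(u_x)$ and $G_{p+1}(u_y)$ share an object of $G_k$, so in $G_k$ the centres $u_x,u_y$ are within $\ell_\infty$-distance $2(p+1)$, hence within distance at most $2(p+1)+\text{(diameter of a }p\text{-box)}\le $ something like $4p+4\le r-1<r$ of each other after accounting for the sliding we did. But $u_x$ was chosen inside a $p$-box covering some $v_x\in S$ and $u_y$ inside a $p$-box covering some $v_y\in S$ with $v_x\ne v_y$ (distinct greedy steps cover distinct uncovered vertices), so $v_x$ and $v_y$ would both lie in $\tilde B_r(v_x)$, contradicting $|\tilde B_r(v_x)\cap S|\le 2$ unless $\{v_x,v_y\}$ is exactly the pair $\{v,v'\}$ handled at a single step — but a single step creates only one centre, so $u_x$ and $u_y$ cannot both come from it. Hence the boxes are disjoint, and the greedy process terminates with the desired $X$.

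The main obstacle is the quantitative geometry in the previous paragraph: turning ``$v'\in\tilde B_r(v)$'' into ``$\mu_v$ and $\mu_{v'}$ both fit in a common $p$-box'' and tracking the $\ell_\infty$-distances through the passage between $G_k$ and the pseudogrid $G$ (edges of $G_k$ contribute a spread of $1$ to box coordinates, which is why $B_r$ of an edge is a union of two vertex-boxes, and why the bound is $4p+4$ rather than $2p$). Once the bookkeeping constant $r-1\ge 4p+4$ is pinned down so that a $p$-box covers any two objects lying in a common $r$-box while its $(p+1)$-enlargement avoids all other elements of $S$, both the covering and the disjointness properties follow as above. This is a routine but slightly fiddly case analysis over the three types (Q1)--(Q3) of split vertices, exactly in the spirit of Lemma~\ref{pick_up_two}.
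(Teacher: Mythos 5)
There is a genuine gap, and it sits at the heart of your construction. You claim that the hypothesis $r-1\ge 4p+4$ is ``exactly what is needed to make a $p$-box large enough to contain two objects that are within an $r$-box of one another.'' The inequality goes the other way: it forces $r\ge 4p+5$, so an $r$-box is much \emph{larger} than a $p$-box, not smaller. If $v'\in\tilde{B}_r(v)$, the objects $\mu_v$ and $\mu_{v'}$ can be at $\ell_\infty$-distance up to roughly $2r\ge 8p+10$ in $G_k$, and no single $p$-box (side length $\approx 2p+1$) can contain both. So the single new centre $x$ you add at each greedy step cannot, in general, cover both $v$ and its mate $v'$.

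This breaks your proof of (iii) as well. Your disjointness argument dismisses the case where the two offending centres $u_x,u_y$ serve the pair $\{v,v'\}$ on the grounds that ``a single step creates only one centre.'' But once a single box cannot cover both $v$ and $v'$, the greedy process necessarily creates two separate centres for them at two different steps, one near $v$ and one near $v'$; if $v$ and $v'$ happen to be at distance only slightly more than $2p$, the $(p+1)$-boxes of those two centres can overlap, and nothing in your argument rules this out. The paper's proof resolves exactly this issue by a three-way case split: elements in no $(r-1)$-pair cover themselves; pairs that are additionally $2p$-pairs (close enough to share a $p$-box) get one common centre; and pairs that are $(r-1)$-pairs but \emph{not} $2p$-pairs get two centres $x,y$ chosen explicitly so that $\tilde{B}_{p+1}(x)\cap\tilde{B}_{p+1}(y)=\emptyset$, which is possible precisely because the two elements are more than $2p$ apart. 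The hypothesis $r-1\ge 4p+4$ is then used only in the verification of (iii): if $\tilde{B}_{p+1}(x)\cap\tilde{B}_{p+1}(y)\ne\emptyset$ and $x,y$ cover $v,w$ respectively, then $\tilde{B}_{p+1}(x)\subseteq\tilde{B}_{2p+2}(v)$ forces $v,w$ to be within $4p+4\le r-1$, hence equal or an $(r-1)$-pair, and each of those cases is handled by the construction. To repair your proof you would need to import this case distinction (or an equivalent one); the purely greedy one-centre-per-step scheme cannot satisfy (i)--(iii) simultaneously.
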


\begin{proof}
  For any integer $q$, say that two elements $v,w\in S$ are a \defin{$q$-pair} if $v\in \tilde{B}_{q}(w)$ or $w\in \tilde{B}_q(v)$.  First observe that any $v\in S$ takes part in at most one $(r-1)$-pair since, otherwise, $|\tilde{B}_{r}(v)\cap S|\ge 3$.

  Refer to \cref{make_disjoint_fig}.  We will define the set $X$ so that it satisfies (\ref{covers_s}) and for each $x\in X$ we will choose one or two elements of $\tilde{B}_{p}(x)\cap S$ and say that $x$ \defin{covers} those elements.  Let $S_1$ be the subset of $S$ containing only those elements that do not take part in any $(r-1)$-pair.  Let $X_1:= S_1$ and we say that each $x\in X_1$ covers itself.

  \begin{figure}
    \begin{center}
      \begin{tabular}{ccc}
        \includegraphics[width=.3\textwidth]{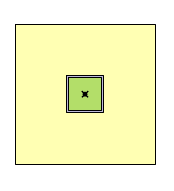} &
        \includegraphics[width=.3\textwidth]{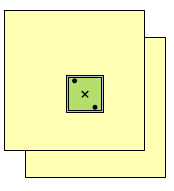} &
        \includegraphics[width=.3\textwidth]{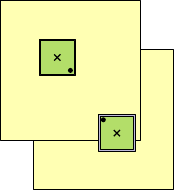} \\
        $X_1$ & $X_2$ & $X_3$
      \end{tabular}
    \end{center}
    \caption{The proof of \cref{make_disjoint}. Disks are vertices in $S$ and crosses are the resulting elements of $X$. The large yellow boxes are $B_{r-1}(v)$ for some $v\in S$ and the small mauve and green boxes are $B_{p+1}(x)$ and $B_{p}(x)$, respectively, for some $x\in X$.}
    \label{make_disjoint_fig}
  \end{figure}
  Let $S_2$ be the subset of $S\setminus S_1$ containing those vertices that take part in a $2p$-pair.  For each $2p$-pair $v,w\in S_2$, there exists  $x\in V(\interior_r(G))$ such that $v,w\in \tilde{B}_{p}(x)$. We include $x$ in $X_2$ and say that $x$ \defin{covers} $v$ and $w$.

  All that remains are the vertices in $S_3:=S\setminus (S_1\cup S_2)$ which take part in some $(r-1)$-pair but do not take part in any $2p$ pair.  For each $(r-1)$-pair $v,w\in S_3$, there exists two vertices $x,y\in V(G)$ such that $v\in \tilde{B}_{p}(x)$, $w\in\tilde{B}_{p}(y)$ and $\tilde{B}_{p+1}(x)\cap \tilde{B}_{p+1}(y)=\emptyset$.  We include $x$ and $y$ in $X_3$ and say that $x$ covers $v$ and $y$ covers $w$.

  Let $X:=X_1\cup X_2\cup X_3$.  By construction, $X$ satisfies (\ref{covers_s}), so it only remains to show that $X$ satisfies (\ref{two_per_box}) and (\ref{disjoint_boxes}).  That $X$ satisfies (\ref{two_per_box}) follows from (\ref{disjoint_boxes}), the fact that each element of $S$ is covered by some $x\in X$, and the fact that each $x\in X$ covers at most two elements of $S$.

  To see that $X$ satisifies (\ref{disjoint_boxes}), observe that if $x\in X$ covers $v\in S$ then $\tilde{B}_{p+1}(x)\subseteq \tilde{B}_{2p+2}(v)$. Therefore, if $x\in X$ covers $v\in S$,  $y\in X$ covers $w\in S$, and $\tilde{B}_{p+1}(x)\cap \tilde{B}_{p+1}(y)\neq\emptyset$ then $w\in \tilde{B}_{4p+4}(v)$ or $v\in \tilde{B}_{4p+4}(w)$.  Since $r-1 \ge 4p+4$, this implies that $v=w$ or $v$ and $w$ are an $(r-1)$-pair.  In the former case, $x=y$ and there is nothing to prove. In the latter case, $v,w\in S_2$ or $v,w\in S_3$.  If $v,w\in S_2$ then, again $x=y$ and there is nothing to prove.  If $v,w\in S_3$ then $x$ and $y$ are specifically chosen so that $\tilde{B}_{p+1}(x)$ and $\tilde{B}_{p+1}(y)$ are disjoint.
\end{proof}

\begin{lem}\label{pick_up_everything}
  Let $r\ge 9$, let $G$ be a $k\times k$ pseudogrid with grid partition $\mathcal{P}:=\{P_\mu:\mu\in\VE(G_k)\}$ and let $S\subseteq V(\interior_r(G))$ be such that $|\tilde{B}_{r}(v)\cap S|\le 2$ for each $v\in S$. Then $G$ contains a path that contains every vertex in $S$.
\end{lem}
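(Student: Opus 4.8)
The plan is to combine Lemma~\ref{make_disjoint} with Lemma~\ref{pick_up_two} in a natural way: use the former to cover $S$ by well-separated small boxes (each containing at most two vertices of $S$), and then use the latter to route a path through each box one at a time, stitching the pieces together. The key point is that the boxes produced by Lemma~\ref{make_disjoint} are separated enough that outside them we have complete freedom to move through $G$, so we can connect consecutive boxes without disturbing the earlier ones.

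More concretely, first I would apply Lemma~\ref{make_disjoint} with a suitable choice of $p$ (for instance $p=1$, so that $r-1\ge 4p+4=8$, i.e.\ $r\ge 9$, which is exactly the hypothesis), obtaining a set $X\subseteq V(G)$ with $S\subseteq\bigcup_{x\in X}\tilde B_p(x)$, with $|\tilde B_p(x)\cap S|\le 2$ for each $x\in X$, and with the enlarged boxes $\tilde B_{p+1}(x)$ pairwise disjoint. For each $x\in X$, the box $\tilde B_p(x)$ corresponds to a sub-pseudogrid of $G$ (namely the one supported on the objects of $G_r(x)$, which is an $a\times a$ pseudogrid with $a=2(p+1)\in\{4,5,\dots\}$ depending on the exact constant; choosing $p$ so that $a\ge 5$ is fine). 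Enumerate the vertices of $X$ in some order $x_1,\dots,x_m$. I would build the path incrementally: at stage $i$, I have a path segment that has already absorbed the (at most two) vertices of $S$ inside $\tilde B_p(x_1),\dots,\tilde B_p(x_i)$, and whose `current' endpoint lies just outside $\tilde B_{p+1}(x_i)$; I then route through $G$ (avoiding all previously-used vertices, all of which lie inside $\bigcup_{j\le i}\tilde B_{p+1}(x_j)$ plus a thin connecting corridor) to a vertex on the boundary column of the sub-pseudogrid around $x_{i+1}$, apply Lemma~\ref{pick_up_two} inside that sub-pseudogrid with $s$ and $t$ the two boundary (column~$1$ and column~$a$) vertices and $v,w$ the vertices of $S$ to be collected (if there is only one, pick $w=v$ or an arbitrary interior vertex; if there are none, just pass through), and emerge from the far side. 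Because the boxes $\tilde B_{p+1}(x_j)$ are pairwise disjoint, the routing between consecutive boxes happens entirely in the part of $G$ untouched so far, so there is always room.

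The main obstacle I expect is the bookkeeping for the connecting corridors between consecutive boxes: I need to argue that, having used up only the vertices inside some disjoint small boxes and some previously-laid connecting paths, there is still a path in the large pseudogrid $G$ from the exit point of box $x_i$ to an entry point of box $x_{i+1}$ that avoids all of this. Since each small box $\tilde B_{p+1}(x_j)$ occupies only a bounded number of columns and rows of $G_k$ (namely $O(p)=O(1)$ of them), and the connecting paths can be chosen to run along a bounded number of rows/columns as well, one can keep the total set of `blocked' rows and columns to $O(m)=O(|X|)$, and then reserve disjoint free rows/columns to route each new corridor; alternatively, and more cleanly, one processes the boxes in an order consistent with, say, left-to-right by column in $G_k$, and routes each corridor monotonically so it cannot collide with future boxes, only with a bounded neighbourhood that can be avoided by a small detour. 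I would handle this by fixing such a geometric ordering and explicitly describing the corridor as a monotone staircase in $G_k$ lifted to $G$ via the grid-partition, which reduces the verification to a finite case check essentially identical in flavour to the one behind Lemma~\ref{pick_up_two}. The endpoints $s,t$ demanded by Lemma~\ref{pick_up_two} (column $1$, column $a$ of the local pseudogrid) match the natural `enter from the left, exit to the right' structure of such a staircase, so the pieces fit together, and concatenating all the segments yields a single simple path of $G$ containing every vertex of $S$.
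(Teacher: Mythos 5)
Your first half matches the paper exactly: apply \cref{make_disjoint} (the paper takes $p=\lfloor(r-5)/4\rfloor$, which equals your $p=1$ when $r=9$) and then use \cref{pick_up_two} inside each box $\tilde B_{p+1}(x)$ to collect the at most two vertices of $S$ it contains. The genuine gap is in the stitching step, which you correctly identify as the main obstacle but do not resolve. Your first fix --- reserving a fresh free row or column for each corridor --- fails by counting: the boxes $B_{p+1}(x)$ have constant size, so $|X|$ can be as large as $\Theta(k^2)$, while $G$ has only $k$ rows and $k$ columns. Your second fix --- ordering the boxes left-to-right by column and routing each corridor as a monotone staircase that ``cannot collide with future boxes'' --- is false as stated: two consecutive boxes in that order can occupy the same few columns at very different rows, and the essentially vertical corridor between them must then cross the row ranges of arbitrarily many other not-yet-processed boxes lying between them; avoiding all of these is a global routing problem, not a finite local case check. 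Moreover, even if a corridor merely passes through a future box $\tilde B_{p+1}(x_j)$ without touching its marked vertices, it consumes vertices of that sub-pseudogrid, and \cref{pick_up_two} gives no control over which vertices its path uses, so the later application of that lemma inside the box can collide with the corridor and destroy simplicity of the final path.

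The paper's device that closes this gap is to fix the global route \emph{first} and only then perform local surgery: take a single snake-like path $P_0$ that traverses every row $i\equiv 1\pmod{2p+1}$ of $G$ and otherwise uses only columns $1$ and $k$. The row spacing $2p+1$ is chosen relative to the box size so that $P_0$ meets each $G[\tilde B_{p+1}(x)]$ along a row, entering on its left boundary column and leaving on its right one; one then replaces $V(P_0)\cap\tilde B_{p+1}(x)$ by the path supplied by \cref{pick_up_two}, whose endpoints $s,t$ are exactly on those two boundary columns and which stays inside $\tilde B_{p+1}(x)$. Because the boxes $\tilde B_{p+1}(x)$ are pairwise disjoint, these local replacements cannot interfere with one another or with the remainder of $P_0$, so no corridor construction or incremental bookkeeping is needed at all. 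You should replace your incremental routing with this fixed-global-path argument.
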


\begin{proof}
  Apply \cref{make_disjoint} with $p=\floor{(r-5)/4}$ to obtain the set $X$.  Now, consider a `snake-like' path $P_0$ that contains every row $i$ with $i\equiv 1\pmod{2p+1}$ and, aside from those rows contains only vertices of columns $1$ and $k$.   For each $x\in X$, this path intersects exactly one row of $G[\tilde{B}_{p+1}(x)]$.  For each $x\in X$ we use \cref{pick_up_two} to replace $V(P_0)\cap \tilde{B}_{p+1}(x)$ with a path that is contained in $\tilde{B}_{p+1}(x)$ and contains the (at most two) vertices of $\tilde{B}_{p+1}(x)\cap S$.  After doing this for each $x\in X$ we obtain a path $P$ that contains every vertex in $S$.
\end{proof}

\subsection{A Packing Lemma}
\label{packing_lemma_sec}

We will make use of the following fairly standard looking packing lemma.

\begin{lem}\label{packing_lemma}
  Let $r\ge 10$ be an even integer and let $Q\subseteq \VE(G_k)$ be such that $\mu_1\not\in \VE(G_{2r+1}(\mu_2))$ for each distinct $\mu_1,\mu_2\in Q$.  Then, for any $\mu\in \VE(G_k)$,
  $|\{\mu_1\in Q: \mu\in \VE(G_{3r+1}(\mu_1))\}| \le 16$.
\end{lem}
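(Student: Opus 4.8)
The plan is to reduce everything to a counting argument in the underlying grid $G_k$, where boxes are genuine axis-aligned squares. First I would record the basic geometric facts: for a vertex $\mu=(i,j)$, the set $\VE(G_s(\mu))$ is contained in (and roughly equal to) the objects of $G_k$ lying in the square of "radius" $s$ (in $\ell_\infty$) around $(i,j)$, and for an edge $\mu$ it is contained in the union of two such squares, hence in a square of radius $s+1$ around either endpoint. Concretely, if $\mu_1\in\VE(G_{3r+1}(\mu))$ then the grid-coordinates of (an endpoint of) $\mu_1$ lie within $\ell_\infty$-distance $3r+2$ of (an endpoint of) $\mu$. The hypothesis, contrapositively, says the $\mu_i\in Q$ are pairwise "far": $\mu_1\notin\VE(G_{2r+1}(\mu_2))$ means the grid-coordinates of endpoints of $\mu_1$ and $\mu_2$ are at $\ell_\infty$-distance more than $2r+1$, so the open squares of radius $r$ (or $r+1$, being careful about edges) centred at the $\mu_i$ are pairwise disjoint — essentially a packing of squares of side $\approx 2r$ into the plane.

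The main step is then a volume/area argument. Fix $\mu\in\VE(G_k)$ and let $Q_\mu:=\{\mu_1\in Q:\mu\in\VE(G_{3r+1}(\mu_1))\}$. Every $\mu_1\in Q_\mu$ has (an endpoint with) grid-coordinates inside the square $W$ of radius $3r+2$ (or so) around (an endpoint of) $\mu$; this square has side length $\le 6r+5$. On the other hand, to each $\mu_1\in Q_\mu$ I associate the half-open square $T(\mu_1)$ of side length $r$ (say, $\{a,\dots,a+r-1\}\times\{b,\dots,b+r-1\}$ with $(a,b)$ a designated endpoint of $\mu_1$, shifted so it "grows" in a fixed direction). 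The separation hypothesis forces these squares to be pairwise disjoint, and each of them meets — or at least lies within bounded distance of — the enlarged square $W'$ of side $\le 6r+5+2r=8r+5$. Since each $T(\mu_1)$ has area $r^2$ and they are disjoint inside a region of area $(8r+5)^2$, we get $|Q_\mu|\le (8r+5)^2/r^2$, and since $r\ge 10$ this is at most $(8.5r)^2/r^2=72.25$, which is not good enough — so I would tighten the bookkeeping (using that the centres, not the whole squares, lie in $W$, so the $T(\mu_1)$ lie in a square of side $\le 6r+5+r=7r+5\le 7.5r$, giving $|Q_\mu|\le 56.25$, still not $16$). To reach $16$ I expect the paper wants a cleaner packing: the disjoint squares of side $2r$ (radius $r$) whose centres lie in a square of side $\le 6r+4$ — here the enclosing square has side $\le 6r+4+2r=8r+4\le 8.4r$, area $\le (8.4r)^2$, divided by $(2r)^2=4r^2$ gives $\le 17.64$, and rounding/parity ($r$ even, the $3r+1,2r+1$ offsets) shaves it to $16$.

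So the key steps, in order, are: (1) translate $\VE(G_s(\cdot))$ membership into an $\ell_\infty$-distance statement on grid-coordinates, handling the vertex/edge distinction by noting an edge-box is contained in a slightly larger vertex-box; (2) from the hypothesis, extract that the objects of $Q$ carry pairwise-disjoint axis-aligned squares of side $\approx 2r$; (3) observe all objects of $Q_\mu$ have a representative coordinate in a square of side $\approx 6r$ around $\mu$; (4) conclude by the area bound that at most $16$ disjoint side-$\approx 2r$ squares fit in the resulting side-$\approx 8r$ region, using $r\ge 10$ and $r$ even to absorb the additive slack and the rounding. The main obstacle — and the reason for the hypotheses $r\ge 10$, $r$ even, and the specific constants $2r+1$ and $3r+1$ — is making the additive constants and the edge-versus-vertex discrepancies small enough relative to $r$ that the ratio rounds down to exactly $16$; I would do this by choosing the associated squares and the enclosing region as tightly as the stated constants allow, rather than by any clever combinatorics. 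Everything else is routine interval arithmetic.
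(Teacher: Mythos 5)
Your overall strategy is the same as the paper's: convert the separation hypothesis into disjointness of $r$-boxes around the elements of $Q_\mu$, observe that these boxes all sit inside a bounded region around $\mu$, and divide. The paper does exactly this (it notes that $\mu_1\not\in\VE(G_{2r+1}(\mu_2))$ forces $\VE(G_r(\mu_1))\cap\VE(G_r(\mu_2))=\emptyset$, and that $\mu\in\VE(G_{3r+1}(\mu_1))$ forces $\VE(G_r(\mu_1))\subseteq\VE(G_{4r+2}(\mu))$), so there is no difference of approach.

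However, your final step has a genuine gap. Your own arithmetic yields a ratio of $17.64$ (and your earlier attempts $72.25$ and $56.25$), and you then appeal to ``rounding/parity'' to get down to $16$ --- but $\lfloor 17.64\rfloor=17$, not $16$, and no parity consideration changes that; a continuous area bound of the form $(8r+c)^2/(2r)^2$ is always at least $16$ and exceeds $17$ precisely for the small values $r\le 20$ that the lemma must cover. What actually closes the argument is replacing continuous area by a discrete count: a side-$(2r+1)$ box contains $(2r+1)^2=4r^2+4r+1$ vertices (not $4r^2$), and the enclosing region has at most $(8r+5)^2=64r^2+80r+25$ vertices, and the inequality $17(4r^2+4r+1)>(8r+5)^2$, i.e.\ $4r^2>12r+8$, holds for all $r\ge 4$ --- the lower-order term $+4r$ in the box size is exactly what beats the $+80r$ in the region size. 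The paper does the analogous computation counting \emph{objects} (vertices and edges): each disjoint box contributes at least $12r^2+8r+1$ objects inside a region of at most $\vol{4r+2}=192r^2+248r+79$ objects, and $192r^2+248r+79<17(12r^2+8r+1)$ reduces to $12r^2-112r-62>0$, which is where the hypothesis $r\ge 10$ enters. So your plan is salvageable, but as written the key inequality is not established; you must count lattice objects rather than areas, and you should check the resulting inequality explicitly at $r=10$ rather than asserting that slack exists.
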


\begin{proof}
  To avoid the need to discuss boundary conditions that have no effect on the result, it is simpler to consider points that are sufficiently far from the boundary of $G_k$. More precisely, we may assume that $Q\cup\{\mu\}\subseteq\interior_{4r+2}(G_{k+8r+4})$.

  The packing condition that $\mu_1\not\in \VE(G_{2r+1}(\mu_2))$ for each distinct $\mu_1,\mu_2\in Q$ implies that $\VE(G_{r}(\mu_1))\cap \VE(G_{r}(\mu_2))=\emptyset$.  Let $Q':=\{\mu_1\in Q: \mu\in \VE(G_{3r+1}(\mu_1))\}$ so that our task is to show that $|Q'|\le 16$.  For any $\mu_1\in Q'$,
  $\mu_1\in\VE(G_{3r+2}(\mu))$ and consequently, $\VE(G_{r}(\mu_1))\subseteq \VE(G_{4r+2}(\mu))$.  For each $\mu_1\in Q'$, $|\VE(G_r(\mu_1))|= 12r^2+8r+1$.  Therefore,
  \[  |Q'|\le \frac{\vol{4r+2}}{12r^2+8r+1} < 17 \enspace ,
  \]
  for all $r\ge 10$.
\end{proof}

\subsection{Finding a Well-Separated Set}
\label{well_separated}

Next we show how, given a colouring $\varphi$ like that guaranteed by \cref{only_frequent}, we can find a set of vertices in $G$ that contain two vertices of each colour and that is compatible with \cref{pick_up_everything}.

\begin{lem}\label{doubled_colour_set}
  For every integer $r\ge 10$ there exists an integer $d\in O(r^4\log r)$ such that the following is true, for every integer $k\ge 1$.\newline
  Let $G$ be a $k\times k$ pseudogrid with grid-partition $\mathcal{P}:=\{V(P_\mu):\mu\in\VE(G_k)\}$ and let $\varphi$ be a vertex colouring of $G$ with the property that, for each $A\subseteq\varphi(V(G))$, $|\varphi_{\mathcal{P}}^{-1}(A)\cap\interior_r(G_k)|\ge d|A|$.
  Then there exists $S\subseteq V(\interior_r(G))$ such that
  \begin{compactenum}[(i)]
    \item \label{hits_both} $|\varphi^{-1}(\alpha)\cap S|= 2$ for each $\alpha\in\varphi(V(G))$; and
    \item \label{spread_out} $|\tilde{B}_r(v)\cap S|\le 2$ for each $v\in S$.
  \end{compactenum}
\end{lem}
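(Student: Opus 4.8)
The plan is to build $S$ greedily, one colour at a time, using a potential/counting argument to show that the greedy process never gets stuck. Fix $r\ge 10$ and think of each colour $\alpha$ as being "located" not at its vertices in $G$ but at the objects $\varphi_{\mathcal P}^{-1}(\alpha)\subseteq\VE(\interior_r(G_k))$. The hypothesis, applied to singletons $A=\{\alpha\}$, says $|\varphi_{\mathcal P}^{-1}(\alpha)\cap\interior_r(G_k)|\ge d$; more importantly the full Hall-type hypothesis $|\varphi_{\mathcal P}^{-1}(A)\cap\interior_r(G_k)|\ge d|A|$ will let me handle the colours simultaneously. The key geometric fact is that a vertex $v\in V(P_\mu)$ "blocks" another vertex $w\in V(P_\nu)$ from being added (i.e.\ $w\in\tilde B_r(v)$) only if $\nu\in\VE(G_r(\mu))$, and by \cref{rbox_size} there are at most $\vol{r}=12r^2+14r+3$ such objects $\nu$.

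First I would set up the greedy selection. Process the colours in an arbitrary order $\alpha_1,\dots,\alpha_c$. When colour $\alpha_i$ is processed, I want to pick two vertices $u_i,u_i'$ with $\varphi(u_i)=\varphi(u_i')=\alpha_i$, both in $\interior_r(G)$, such that neither lies in $\tilde B_r$ of any previously chosen vertex, and such that $u_i'\notin\tilde B_r(u_i)$ — this last condition, together with the fact that later choices are kept $\tilde B_r$-far from $u_i,u_i'$, is exactly what guarantees property (ii) $|\tilde B_r(v)\cap S|\le 2$ at the end (a vertex $v\in S$ is within $\tilde B_r$ of at most its own "partner", and of nothing else). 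To pick $u_i$: the vertices of colour $\alpha_i$ live on the objects in $\varphi_{\mathcal P}^{-1}(\alpha_i)\cap\interior_r(G_k)$; each of the $2(i-1)$ already-chosen vertices forbids those objects lying in one $r$-box, i.e.\ at most $\vol{r}$ objects each, plus I forbid the (at most $\vol{r}$) objects in the $r$-box of $u_i$ when choosing $u_i'$. So as long as $|\varphi_{\mathcal P}^{-1}(\alpha_i)\cap\interior_r(G_k)|>(2(i-1)+1)\vol{r}$ there is a legal choice. Wait — this requires $d>(2c-1)\vol{r}$, but $c$ is unbounded, so a naive per-colour count is not enough; this is where the main obstacle lies, and it is why the statement's $d$ is allowed to depend only on $r$.

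To beat the unbounded $c$, I would instead argue via Hall/\cref{d_hall} globally, charging carefully. Here is the refined plan. Build a bipartite "conflict-aware" auxiliary graph whose left part has two twins $\alpha,\alpha'$ for each colour and whose right part is $\VE(\interior_r(G_k))$, with $\alpha$ (and $\alpha'$) adjacent to every object $\mu$ with $\alpha\in\varphi_{\mathcal P}(\mu)$; by \cref{d_hall} (with the hypothesis giving the expansion $\ge d|A|\ge 2|A|$ for the doubled left part, once $d\ge 2$) there is a subgraph $M$ assigning $d$ objects to each $\alpha$ and each $\alpha'$, with every object used at most once. Now each colour owns a private pool of $2d$ distinct objects, no object shared between colours. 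The problem reduces to: from each colour's pool of $2d$ objects, pick two objects $\mu,\nu$ that are $G_r$-far from each other and from all other picked objects — then pick one vertex of the right colour inside each picked $P_\mu$. Process colours greedily; when colour $\alpha_i$ is handled, a previously picked object $\mu$ forbids all objects within $G_{2r+1}(\mu)$, i.e.\ "kills" at most $\vol{2r+1}$ objects, but — crucially — those killed objects are spread across all colours' pools, and I only need that colour $\alpha_i$'s own pool of $2d$ objects is not entirely consumed. Here is the counting that closes it: I can afford to be wasteful and charge each forbidden object to the picked object that forbade it; a single picked object $\mu$ forbids $\le\vol{2r+1}$ objects total, hence forbids $\le\vol{2r+1}$ objects in any particular pool, but since there are up to $2c$ picked objects this still looks unbounded. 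The fix is to process in an order and use the packing lemma (\cref{packing_lemma}): because all picked objects are pairwise $G_{2r+1}$-far, for any fixed object $\mu_0$ in $\alpha_i$'s pool at most $16$ previously picked objects can have $\mu_0\in\VE(G_{3r+1}(\cdot))$ — wait, that's the wrong direction. Let me instead bound it as: the picked objects within $G_{4r+2}(\text{pool of }\alpha_i)$ that can forbid pool members; since the pool of $\alpha_i$ occupies $2d$ objects each forbidden only from within its own $G_{2r+1}$-neighbourhood, and picked objects are $G_{2r+1}$-separated, the number of picked objects that forbid \emph{some} member of $\alpha_i$'s pool is $O(d)$ by volume packing ($\le 2d\cdot\vol{2r+1}/|\VE(G_r(\cdot))| = O(d)$), each killing $\le\vol{2r+1}$ pool members, so $\le O(d\,r^2)$ pool members are killed. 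Choosing $d$ large enough that $2d > c_1 d\, r^2$ fails — so this still does not work, confirming that the genuine difficulty is global and that one must set $d=\Theta(r^4\log r)$ rather than $\Theta(r^4)$, i.e.\ a clean greedy argument is insufficient and one needs a probabilistic/Lovász-Local-Lemma-style argument (consistent with the $\log r$ factor in the stated $d$).

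Accordingly, the real plan is: (1) apply \cref{d_hall} to give every colour a private pool of $\Theta(d)=\Theta(r^4\log r)$ pairwise-disjoint objects; (2) for each colour independently choose a uniformly random pair of objects from its pool, and inside each chosen object a vertex of the correct colour, defining a candidate set; (3) for each "bad event" $B_{uv}$ that two candidate vertices $u,v$ of different colours land within $\tilde B_r$ of each other — which happens only if their host objects land within $G_r$ of each other, an event of probability $O(\vol{r}/d)=O(r^2/(r^4\log r))=O(1/(r^2\log r))$ — apply the asymmetric Lovász Local Lemma \cref{weighted_lovasz}, where $B_{uv}$ depends only on the random choices of the two colours of $u$ and $v$, and each colour's choice is entangled with $O(r^4\log r \cdot \vol{r})$-ish others — here the $\log r$ in $d$ is exactly what makes $e\cdot p\cdot(\text{degree})<1$; (4) conclude that with positive probability no bad event occurs, also enforcing $v'\notin\tilde B_r(v)$ within the same colour by the same union (a colour's two objects are in a pool of $\Theta(r^4\log r)$ disjoint objects so collide with probability $O(r^2/(r^4\log r))$ too); (5) the resulting set $S$ has exactly two vertices per colour (property (i)) and, by the avoided bad events, $|\tilde B_r(v)\cap S|\le 2$ for every $v\in S$ (property (ii)), and $S\subseteq V(\interior_r(G))$ since all pool objects are in $\VE(\interior_r(G_k))$. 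The main obstacle, as the above false starts show, is precisely getting the Local Lemma parameters to balance, which is why $d$ must carry the extra $\log r$ factor; everything else (the Hall argument, the box-volume bounds, translating "objects close" to "vertices close") is routine.
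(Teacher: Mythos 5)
Your overall architecture---use \cref{d_hall} to give each colour a private pool of $d$ pairwise-disjoint objects (this is precisely what the paper's \cref{one_colour_per_object} does), then pick two objects per colour independently at random and kill collisions with the Local Lemma---is a legitimate route, and a genuinely different one from the paper's. But the step in which you verify the Local Lemma hypothesis is wrong as written, and it is the crux of the whole argument. You bound each bad event's probability by $p=O(\vol{r}/d)$ and the dependency degree by $D=O(d\cdot\vol{r})$ (both bounds are correct), and then assert that the $\log r$ factor in $d$ makes $e\,p\,D<1$. It does not: $p\,D=O(\vol{r}/d)\cdot O(d\,\vol{r})=O(\vol{r}^2)=O(r^4)$, i.e.\ the $d$'s cancel, so no choice of $d$---with or without a logarithm---satisfies the symmetric condition. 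This is the same cancellation that defeated your two earlier greedy attempts, resurfacing inside the Local Lemma; as written, step (3) fails.

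The gap is repairable, but only by genuinely using the asymmetric form of \cref{weighted_lovasz}, with weights tied to the individual event probabilities rather than to the worst case. The saving observation, which you do not make, is that although $\Theta(d\,\vol{r})$ colours $\gamma$ may have a pool object near colour $\alpha$'s pool, almost all of the corresponding events $B_{\alpha\gamma}$ have far smaller probability than $p$: writing $n_{\alpha\gamma}$ for the number of close pairs with one object in $\alpha$'s pool and one in $\gamma$'s, one has $\Pr(B_{\alpha\gamma})\le 4n_{\alpha\gamma}/d^2$, and disjointness of the pools gives $\sum_{\gamma}n_{\alpha\gamma}=O(d\,\vol{r})$, hence $\sum_{\gamma}\Pr(B_{\alpha\gamma})=O(\vol{r}/d)$. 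Taking $w(B)=2\Pr(B)$ then satisfies the hypothesis of \cref{weighted_lovasz} once $d\ge C\vol{r}$, so your plan closes (and, if the above is carried out carefully, would even allow $d=O(r^2)$). For contrast, the paper takes a different path entirely: a deterministic greedy first round places well-separated pairs for most colours; for each failed colour all of its representatives are provably concentrated near the already-chosen set $Q_1$; a random permutation of $Q_1$ with a claiming/marking process, the packing lemma (\cref{packing_lemma}), and the Local Lemma guarantee that every failed colour is claimed by many elements of $Q_1$; and a final application of \cref{d_hall} extracts two well-separated representatives per failed colour. Your approach, once the Local Lemma bookkeeping is fixed, avoids the permutation, the claiming process, and the second Hall step, at the price of having to control a quadratic family of pairwise collision events.
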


\begin{proof}
  We begin by applying \cref{one_colour_per_object} to obtain the colouring $\phi:\VE(G_k)\to\varphi(V(G))\cup\{\perp\}$ in which $\phi^{-1}(\alpha)\ge d$ for each $\alpha\in\varphi(V(G))$.  Observe that it is now sufficient to find a $2|\varphi(V(G))|$-element subset $Q\subseteq\VE(\interior_r(G_k))$ such that
  \begin{compactenum}[(i)]
    \item $|\phi^{-1}(\alpha)\cap Q|= 2$ for each $\alpha\in\varphi(V(G))$ and
    \item $|\VE(G_r(\mu))\cap Q|\le 2$ for each $\mu\in Q$.
  \end{compactenum}
  Indeed, with such a $Q$ we can obtain $S$ by taking one vertex of colour $\phi(\mu)$ from $P_\mu$ for each $\mu\in Q$.

  We construct $Q$ in two rounds.  In the first round, we start with an initially empty set $Q_1$ and repeat the following for each $\alpha\in\varphi(V(G))$:
  If there exists distinct $\mu_1,\mu_2\in\VE(G_k)$ such that
  \begin{compactenum}[(a)]
    \item $\phi(\mu_1)=\phi(\mu_2)=\alpha$; and\label{hits_both_q}
    \item for each $b\in\{1,2\}$, $\mu_b\not\in \VE(G_{2r+1}(\mu_{3-b})) \cup \bigcup_{\mu\in Q_1} \VE(G_{2r+1}(\mu))$,\label{spread_out_q}
  \end{compactenum}
  then we include $\mu_1$ and $\mu_2$ in $Q_1$ and declare the first round an \defin{$\alpha$-success}.  Otherwise, we declare the first round an \defin{$\alpha$-failure}.

  Let $\overline{X}\subseteq\varphi(V(G))$ be the set of colours $\alpha$ for which the first round was an $\alpha$-success. At the end of this process, the set $Q_1$ certainly satisfies (\ref{spread_out}).  In fact, it satisfies an even stronger property: $B_{2r+1}(\mu)\cap Q_1=\{\mu\}$ for each $\mu\in Q_1$, which implies that $|B_r(\nu)\cap Q_1|\le 1$ for each $\nu\in\VE(G_k)$. However, $Q_1$ only satisfies (\ref{hits_both}) for the colours in $\overline{X}$.  We now use the second round to create a set $Q_2$ to handle the colours in $X:=\varphi(V(G))\setminus \overline{X}$.

  Define a bipartite graph $H$ with parts $L(H):=X$ and $R(H):=Y:=Q_1$.  Let $\pi:=\mu_1,\ldots,\mu_{|Y|}$ be a random permutation of $Y$.\footnote{A \defin{random permutation} of $Y$ is a permutation chosen uniformly from the set of all $|Y|!$ permutations of $Y$.}  We include an edge $\alpha\mu_i$ in $H$ if and only if
  \[
    \VE(G_{2r+1}(\mu_i))\setminus\left(\bigcup_{j=1}^{i-1}\VE(G_{3r+1}(\mu_j))\right)
  \]
  contains some object $\nu$ of colour $\phi(\nu)=\alpha$.  A helpful way to view this process is as follows: Initially, every vertex of $G_k$ is \defin{unmarked}.  For $i=1,\ldots,|Y|$, $\mu_i$ claims every unmarked object in $\VE(G_{2r+1}(\mu_i))$ and then marks every object in $\VE(G_{3r+1}(\mu_i))$. Then $H$ contains the edge $\alpha\mu_i$ if and only if $\mu_i$ claims at least one object of colour $\alpha$.

  We now want to use the Local Lemma (\cref{weighted_lovasz}) to show that, with probability greater than zero, $\deg_H(\alpha)\ge 2\vol{2r+1}$ for each $\alpha\in X$.  For each $\alpha\in X$, consider the set $\phi^{-1}(\alpha)\subseteq \VE(G_k)$ and recall that, since $\phi$ comes from \cref{one_colour_per_object}, $|\phi^{-1}(\alpha)| \ge d$.

  For each $\alpha\in X$, let $\Phi_\alpha:=\phi^{-1}(\alpha)\cap \left(\bigcup_{\mu\in Y}\VE(G_{2r+1}(\mu))\right)$.  In words, $\Phi_\alpha$ consists of all objects of colour $\alpha$ that were eliminated from consideration because they are too close to an element of $Y$. We claim that, for each $\alpha\in X$, $|\Phi_\alpha|\ge d-\vol{r}$.  Otherwise $|\phi^{-1}(\alpha)\setminus\Phi_\alpha| > \vol{r}$. Then, when the colour $\alpha$ was considered during the first round, we could have taken $\mu_1$ to be any object in $\phi^{-1}(\alpha)\setminus\Phi_\alpha$ and taken $\mu_2$ to be any object in $\phi^{-1}(\alpha)\setminus\Phi_\alpha\setminus B_{2r+1}(\mu_1)$. (Recall that $\vol{r}\ge |B_{2r+1}(\mu_1)|$, by definition.)  Thus, the first round would have been an $\alpha$-success, contradicting the fact that $\alpha\in X$.

  Our next step is to show that, for each $\alpha\in X$, the random variable $\deg_H(\alpha)$ dominates\footnote{A random variable $X$ dominates a random variable $Y$ if $\Pr(X\ge x)\ge \Pr(Y\ge x)$ for all $x\in\R$.} a $\binomial(\ceil{|\Phi_\alpha|/\vol{7r}},1/16)$ random variable, which allows us to establish an exponential inequality for $\Pr(\deg_H(\alpha) < 2\vol{2r+1})$. Suppose that, for some $\alpha\in X$ and $\mu\in Y$, $B_{2r+1}(\mu)\cap \Phi_\alpha$ contains at least one element $\nu$ and consider the set
  \[
     Y_\nu := \{\xi\in Y:\nu\in\VE(G_{3r+1}(\xi))\} \enspace .
  \]
  The edge $\alpha\mu$ will appear in $H$ if, in our random permutation $\pi$ of $Y$, $\nu$ appears before any other element of $Y_\nu$.  By \cref{packing_lemma}, $|Y_\nu|\le 16$, so the probability that the edge $\alpha\mu$ appears in $H$ is at least $1/|Y_\nu|\ge 1/16$. This is already enough to establish that $\E(\deg_H(\alpha)) \in \Omega(|\Phi_\alpha|/r^2)$, but in order to obtain a sufficiently strong concentration result for $\deg_H(\alpha)$ we need to find some independence.\footnote{There are many ways to obtain a concentration result here and we use a simple method with the goal of being self-contained.  For a tighter result, use the method of bounded differences \cite{mcdiarmid:on}.}  To do this, set $J:=\Phi_\alpha$ and consider the greedy process of repeatedly choosing some $\mu\in Y$ such that $\VE(G_{2r+1}(\mu))\cap J$ contains at least one element $\nu$ and then setting $J:=J\setminus \VE(G_{7r}(\nu))$.\footnote{The constant $7$ is overkill here and is only used for simplicity; $6r+O(1)$ would be sufficient.}  This process continues until $J$ is empty. Since $|\VE(G_{7r}(\mu))|\le \vol{7r}$, each iteration in this process eliminates at most $\vol{7r}$ elements from $J$ so the number, $t$, of iterations is at least $\ceil{|\Phi_\alpha|/\vol{7r}}$.

  Let $\{\mu_1',\ldots,\mu_t'\}$ be the subset of $Y$ chosen by this process and let $\{\nu_1,\ldots,\nu_t\}\subseteq\Phi_\alpha$ be the corresponding elements of $\Phi_\alpha$.  The important observation now is that the sets $Y_{\nu_1},\ldots,Y_{\nu_t}$ are pairwise disjoint.  Indeed, $Y_{\nu_i}\subseteq B_{3r+2}(\nu_i)$, $Y_{\nu_j}\subseteq B_{3r+2}(\nu_j)$ and the fact that $\nu_i\not\in B_{7r}(\nu_j)$ implies that $\VE(G_{3r+2}(\nu_i))\cap \VE(G_{3r+2}(\nu_j))=\emptyset$.  Therefore, if we let $U_i$ denote the event ``$\mu_i'$ apppears in $\pi$ before any other element of $Y_{\nu_i}$'' then the events $U_1,\ldots,U_t$ are mutually independent.  Indeed, each $U_i$ depends only on the relative order of the subset $Y_{\nu_i}$ in the permutation $\pi$.  For each $i\in\{1,\ldots,k\}$, $\Pr(U_i)\ge 1/16$.  Therefore, $\deg_H(\alpha)\ge \sum_{i=1}^t \mathbbm{1}_{U_i}$ dominates a $\binomial(t,1/16)$ random variable.  Therefore,
  \begin{align}
    \Pr\left(\deg_H(\alpha)< 2\vol{2r+1}\right)
    & \le \Pr\left(\sum_{i=1}^t \mathbbm{1}_{U_i} < 2\vol{2r+1}\right) \notag \\
    & \le \Pr(\binomial(t,1/16)\le 2\vol{2r+1}) \notag \\
    & = \sum_{x=0}^{2\vol{2r+1}} \binom{t}{x}\left(\frac{1}{16}\right)^x\left(\frac{15}{16}\right)^{t-x} \notag \\
    & < \sum_{x=0}^{2\vol{2r+1}} \binom{t}{x}\left(\frac{15}{16}\right)^{t} \notag \\
    & < (2t)^{2\vol{2r+1}}\cdot\left(\frac{15}{16}\right)^{t} \notag \\
    & = \exp\big({2\vol{2r+1}}\ln(2t)-t\ln(16/15)\big) \notag \\
    & \le \exp\left({2\vol{2r+1}}\ln(2|\Phi_\alpha|)-\frac{|\Phi_\alpha|\ln(16/15)}{\vol{7r}}\right)
      & \text{(since $|\Phi_\alpha|/\vol{7r} \le t \le |\Phi_\alpha|$)} \notag \\
    & \le \exp\left({2\vol{2r+1}}\ln(2|\Phi_\alpha|)-\frac{|\Phi_\alpha|}{16\vol{7r}}\right)
      & \text{(since $\ln(16/15) > 1/16$).}
       \label{probability}
  \end{align}

  We are now ready for an application of \cref{weighted_lovasz}.  For each $\alpha\in X$, let $E_\alpha$ denote the event ``$\deg_H(\alpha)< 2\vol{2r+1}$'' and let $\mathcal{E}:=\{E_\alpha:\alpha\in X\}$. For each $\alpha\in X$, define
  \[
     \Gamma_\alpha := \left\{E_\beta \in \mathcal{E}: \Phi_\beta\cap \left(\bigcup_{\nu\in \Phi_\alpha} \VE(G_{7r}(\nu))\right)\neq\emptyset\right\}
  \]
  and observe that $|\Gamma_\alpha|\le \vol{7r}\cdot|\Phi_\alpha|$.
  The event $E_\alpha$ is mutually independent of $\mathcal{E}\setminus\Gamma_\alpha$ since, for any $E_\beta\in\mathcal{E}\setminus\Gamma_\alpha$ the sets $Y_\alpha:=\bigcup_{\nu\in\Phi_\alpha} Y_\nu$ and $Y_\beta:=\bigcup_{\nu\in \Phi_\beta} Y_\nu$ are disjoint and $\deg_H(\alpha)$ and $\deg_H(\beta)$ are each entirely determined by the relative orders of $Y_\alpha$ and $Y_\beta$ in $\pi$, respectively.

  For each $\alpha\in X$, let $w(E_\alpha):=1/\tau$, for some $\tau>1$ to be defined shortly.  Then
  \begin{align}
    w(E_\alpha)\prod_{E_\beta\in\Gamma_\alpha} (1-w(E_\beta))
    & = \tfrac{1}{\tau}\left(1-\tfrac{1}{\tau}\right)^{|\Gamma_\alpha|-1} \notag \\
    & > \tfrac{1}{\tau}\left(1-\tfrac{1}{\tau}\right)^{|\Gamma_\alpha|}
      & \text{(since $1-1/\tau<1$)}\notag \\
    & > \exp(-\ln\tau-|\Gamma_\alpha|/\tau)
      & \text{(since $e^{-1/\tau}> 1-1/\tau$)}\notag \\
    & \ge \exp\left(-\ln\tau - \vol{7r}\cdot|\Phi_\alpha|/\tau\right)
      & \text{(since $|\Gamma_\alpha|\le \vol{7r}\cdot\Phi_\alpha$).}  \label{weight_product}
  \end{align}

  Comparing \cref{weight_product,probability} we see that both quantities decrease exponentially in $|\Phi_\alpha|$ but \cref{weight_product} does so at a rate that can be controlled with $\tau$.
  Taking $\tau = {32\vol{7r}^2}$, and using \cref{weight_product,probability}, we find that
  \[
     \Pr\left(E_\alpha\right) \le w(E_\alpha)\prod_{\beta\in\Gamma_\alpha}(1-w(E_\beta))
  \]
  provided that
  \[   |\Phi_\alpha| \ge {32\vol{7r}}\big(\ln({32\vol{7r}}+1) + {2\vol{2r+1}}\ln(2|\Phi_\alpha|)\big) \enspace .
  \]
  The right hand side of this last equation is of the form $O(r^2\log r+r^4\log|\Phi_\alpha|)$.  Since $|\Phi_\alpha|\ge d-\vol{r}\in d- O(r^2)$, this can therefore be satisfied for some $d\in O(r^4\log r)$.

  Therefore, by \cref{weighted_lovasz}, there exists a permutation $\pi$ of $Y$ that produces a bipartite graph $H$ with $\deg_H(\alpha)\ge {2\vol{2r+1}}$ for each $\alpha\in X$.  On the other hand, $\deg_{H}(y)\le |\VE(G_r(y))|-1 < {\vol{2r+1}}$ for each $y\in Y$.  Therefore, for any $A\subseteq X$, $|N_H(A)|> {2\vol{2r+1}}|A|/{\vol{2r+1}} = 2|A|$.  Therefore, by \cref{d_hall}, there is a subgraph $M$ of $H$ in which $\deg_M(\alpha)=2$ for each $\alpha\in X$ and $\deg_M(\mu)\le 1$ for each $\mu\in Y$. Each edge $\alpha\mu$ of $M$ corresponds to some $\nu\in\Phi_\alpha\cap\VE(G_r(\mu))$, which we place in $Q_2$.

  Now it is straightforward to verify that $Q:=Q_1\cup Q_2$ satisifies (\ref{hits_both}).  Furthermore, for each $\mu\in Q_1$, $B_r(\mu)$ contains no object in $Q_1\setminus\{\mu\}$ and contains at most one element of $Q_2$.  Similarly, for each $\nu\in Q_2$, $B_r(\nu)$ contains no element of $Q_2\setminus\{\nu\}$ and contains at most one element of $Q_1$.  Therefore $Q$ satisfies (\ref{spread_out}).
\end{proof}

\subsection{Wrapping Up}
\label{wrapping_up}

We now have all the pieces needed to complete our lower bound on the linear chromatic number of pseudogrids:

\begin{proof}[Proof of \cref{pseudogrid_lower_bound}]
  Let $G$ be a $k\times k$ pseudogrid and suppose, for the sake of contradiction, that $G$ has a linear colouring $\varphi$ using fewer than $\epsilon k$ colours where $\epsilon:=1/(d+2r)$, $r:= 13$ and $d\in O(r^4\log r)=O(1)$ is some large integer constant.  Then, by \cref{only_frequent}, $G$ contains a $k'\times k'$ pseudogrid $G'$ with $k'\ge k/2$ having a grid-decomposition $\mathcal{P}':=\{P_\mu':\mu\in \VE(G_{k'})\}$ with the property that, for any $A\subseteq\varphi(V(G'))$, $|\varphi^{-1}_{\mathcal{P'}}(A)\cap\interior_r(G_{k'})|\ge d|A|$.  For a sufficiently large constant $d$, \cref{doubled_colour_set} implies that $G'$ contains a set $S$ of $2|\varphi(V(G')|$ vertices containing two vertices of each colour in $\varphi(V(G'))$ such that $|B_r(\mu)\cap S|\le 2$ for each $\mu\in S$.  By \cref{pick_up_everything}, $G'$ contains a path $P$ that contains every vertex in $S$.  Since $P$ is contained in $G'$, $\varphi(V(P))=\varphi(V(G'))$, so $P$ is has no center under $\varphi$, contradicting the assumption that $\varphi$ is a linear colouring of $G$.
\end{proof}

\section*{Acknowledgement}

The main subject of this paper was posed as an open problem at the 9th Annual Bellairs Workshop on Geometry and Graphs (v2), which was held January 21--28, 2022 and again at the 16th Annual Workshop on Probability and Combinatorics, which was held March 25--April 1, 2022. We are grateful to the organizers and participants of both workshops for providing a stimulating research environment.
Pat~Morin especially enjoyed early-morning discussions with Stefan~Langerman on the Seabourne Terrace.

We are also grateful to Omer Angel for pointing out that the problem on a $k\times k$ grid (as opposed to pseudogrid) has a simple solution, illustrated in \cref{omer}: Partition the grid into $2\times 2$ grids and repeatedly delete a subgrid if it contains a vertex whose colour is unique.  If this causes the grid to become disconnected, keep only the largest remaining component and continue.  This process deletes at most $4\epsilon k$ vertices.  For $\epsilon < 1/4$, this implies that the graph that remains at the end of thie process has at least $k^2/2$ vertices.  In particular, the graph that remains is non-empty and every colour that appears in this graph is used on at least two vertices.  Finally, this graph has a Hamiltonian path (which can be constructed by traversing the boundary of a certain tree contained in the dual graph).  Unfortunately, this solution breaks down immediately for pseudogrids since a pseudogrid is not necessarily Hamiltonian, even before removing some of its vertices and edges.

\begin{figure}
  \centering
  \begin{tabular}{cccc}
    \includegraphics[page=1]{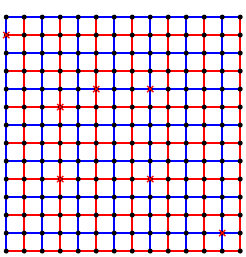} &
    \includegraphics[page=2]{figs/omer} &
    \includegraphics[page=4]{figs/omer}
  \end{tabular}
  \caption{A simpler solution for colourings of the grid. The (orange) tree used to create the (purple) Hamiltonian path is created so that its edges and leaves use only odd-numbered rows and columns of the dual graph.}
  \label{omer}
\end{figure}

\bibliographystyle{plainurlnat}
\bibliography{lin-vs-cen}

\begin{thebibliography}{18}
\providecommand{\natexlab}[1]{#1}
\providecommand{\url}[1]{\texttt{#1}}
\providecommand{\urlprefix}{URL }
\expandafter\ifx\csname urlstyle\endcsname\relax
  \providecommand{\doi}[1]{\href{https://dx.doi.org/#1}{\nolinkurl{doi:#1}}}\else
  \providecommand{\doi}[1]{\href{https://dx.doi.org/#1}{\nolinkurl{doi:#1}}}\fi
\providecommand{\eprint}[2][]{\url{#2}}

\bibitem[{Alon and Spencer(2008)}]{alon.spencer:probabilistic}
Noga Alon and Joel~H. Spencer.
\newblock \emph{The Probabilistic Method, Third Edition}.
\newblock Wiley-Interscience series in discrete mathematics and optimization.
  Wiley, 2008.

\bibitem[{Bodlaender et~al.(1998)Bodlaender, Deogun, Jansen, Kloks, Kratsch,
  M{\"{u}}ller, and Tuza}]{bodlaender.deogun.ea:rankings}
Hans~L. Bodlaender, Jitender~S. Deogun, Klaus Jansen, Ton Kloks, Dieter
  Kratsch, Haiko M{\"{u}}ller, and Zsolt Tuza.
\newblock Rankings of graphs.
\newblock \emph{{SIAM} J. Discret. Math.}, 11(1):168--181, 1998.
\newblock \doi{10.1137/S0895480195282550}.

\bibitem[{Bodlaender et~al.(1995)Bodlaender, Gilbert, Hafsteinsson, and
  Kloks}]{bodlaender.gilbert.ea:approximating}
Hans~L. Bodlaender, John~R. Gilbert, Hj{\'{a}}lmtyr Hafsteinsson, and Ton
  Kloks.
\newblock Approximating treewidth, pathwidth, frontsize, and shortest
  elimination tree.
\newblock \emph{J. Algorithms}, 18(2):238--255, 1995.
\newblock \doi{10.1006/jagm.1995.1009}.

\bibitem[{Chuzhoy and Tan(2021)}]{chuzhoy.tan:towards}
Julia Chuzhoy and Zihan Tan.
\newblock Towards tight(er) bounds for the excluded grid theorem.
\newblock \emph{J. Comb. Theory, Ser. {B}}, 146:219--265, 2021.
\newblock \doi{10.1016/j.jctb.2020.09.010}.

\bibitem[{Czerwiński et~al.(2021)Czerwiński, Nadara, and
  Pilipczuk}]{czerwinski.nadara.ea:improved}
Wojciech Czerwiński, Wojciech Nadara, and Marcin Pilipczuk.
\newblock Improved bounds for the excluded-minor approximation of treedepth.
\newblock \emph{{SIAM} J. Discret. Math.}, 35(2):934--947, 2021.
\newblock \doi{10.1137/19M128819X}.

\bibitem[{Deogun et~al.(1999)Deogun, Kloks, Kratsch, and
  M{\"{u}}ller}]{deogun.kloks.ea:on}
Jitender~S. Deogun, Ton Kloks, Dieter Kratsch, and Haiko M{\"{u}}ller.
\newblock On the vertex ranking problem for trapezoid, circular-arc and other
  graphs.
\newblock \emph{Discret. Appl. Math.}, 98(1-2):39--63, 1999.
\newblock \doi{10.1016/S0166-218X(99)00179-1}.

\bibitem[{Dereniowski and Kubale(2003)}]{dereniowski.kubale:cholesky}
Dariusz Dereniowski and Marek Kubale.
\newblock Cholesky factorization of matrices in parallel and ranking of graphs.
\newblock In Roman Wyrzykowski, Jack~J. Dongarra, Marcin Paprzycki, and Jerzy
  Wasniewski, editors, \emph{Parallel Processing and Applied Mathematics, 5th
  International Conference, {PPAM} 2003, Czestochowa, Poland, September 7-10,
  2003. Revised Papers}, volume 3019 of \emph{Lecture Notes in Computer
  Science}, pages 985--992. Springer, 2003.
\newblock \doi{10.1007/978-3-540-24669-5\_127}.

\bibitem[{Dereniowski and Nadolski(2006)}]{dereniowski.nadolski:vertex}
Dariusz Dereniowski and Adam Nadolski.
\newblock Vertex rankings of chordal graphs and weighted trees.
\newblock \emph{Inf. Process. Lett.}, 98(3):96--100, 2006.
\newblock \doi{10.1016/j.ipl.2005.12.006}.

\bibitem[{Diestel(2012)}]{diestel:graph}
Reinhard Diestel.
\newblock \emph{Graph Theory, 4th Edition}, volume 173 of \emph{Graduate texts
  in mathematics}.
\newblock Springer, 2012.

\bibitem[{Hall(1935)}]{hall:on}
Philip Hall.
\newblock On representatives of subsets.
\newblock \emph{J. London Math. Soc.}, 10(1):26–30, 1935.
\newblock \doi{10.1112/jlms/s1-10.37.26}.

\bibitem[{Kun et~al.(2021)Kun, O'Brien, Pilipczuk, and
  Sullivan}]{kun.obrien.ea:polynomial}
Jeremy Kun, Michael~P. O'Brien, Marcin Pilipczuk, and Blair~D. Sullivan.
\newblock Polynomial treedepth bounds in linear colorings.
\newblock \emph{Algorithmica}, 83(1):361--386, 2021.
\newblock \doi{10.1007/s00453-020-00760-0}.

\bibitem[{McDiarmid(1989)}]{mcdiarmid:on}
Colin McDiarmid.
\newblock On the method of bounded differences.
\newblock In \emph{Invited Papers at the Twelfth British Combinatorial
  Conference}, volume 141 of \emph{Surveys in Combinatorics}, page 148–188.
  Cambridge University Press, 1989.

\bibitem[{Nešetřil and Ossona~de Mendez(2006)}]{nesetril.ossona:tree-depth}
Jaroslav Nešetřil and Patrice Ossona~de Mendez.
\newblock Tree-depth, subgraph coloring and homomorphism bounds.
\newblock \emph{Eur. J. Comb.}, 27(6):1022--1041, 2006.
\newblock \doi{10.1016/j.ejc.2005.01.010}.

\bibitem[{Nešetřil and Ossona~de Mendez(2008)}]{nesetril.ossona:grad}
Jaroslav Nešetřil and Patrice Ossona~de Mendez.
\newblock Grad and classes with bounded expansion i. decompositions.
\newblock \emph{Eur. J. Comb.}, 29(3):760--776, 2008.
\newblock \doi{10.1016/j.ejc.2006.07.013}.

\bibitem[{Nešetřil and Ossona~de Mendez(2015)}]{nesetril.ossona:on}
Jaroslav Nešetřil and Patrice Ossona~de Mendez.
\newblock On low tree-depth decompositions.
\newblock \emph{Graphs Comb.}, 31(6):1941--1963, 2015.
\newblock \doi{10.1007/s00373-015-1569-7}.

\bibitem[{Nešetřil and de~Mendez(2012)}]{nesetril.ossona:sparsity}
Jaroslav Nešetřil and Patrice~Ossona de~Mendez.
\newblock \emph{Sparsity: Graphs, Structures, and Algorithms}, volume~28 of
  \emph{Algorithms and combinatorics}.
\newblock Springer, 2012.
\newblock \doi{10.1007/978-3-642-27875-4}.

\bibitem[{Pilipczuk and Siebertz(2021)}]{pilipczuk.siebertz:polynomial}
Michal Pilipczuk and Sebastian Siebertz.
\newblock Polynomial bounds for centered colorings on proper minor-closed graph
  classes.
\newblock \emph{J. Comb. Theory, Ser. {B}}, 151:111--147, 2021.
\newblock \doi{10.1016/j.jctb.2021.06.002}.

\bibitem[{Robertson et~al.(1994)Robertson, Seymour, and
  Thomas}]{robertson.seymour.ea:quickly}
Neil Robertson, Paul~D. Seymour, and Robin Thomas.
\newblock Quickly excluding a planar graph.
\newblock \emph{J. Comb. Theory, Ser. {B}}, 62(2):323--348, 1994.
\newblock \doi{10.1006/JCTB.1994.1073}.

\end{thebibliography}

\appendix
\section{Proof of \cref{pick_up_two}}
\label{pick_up_two_proof}

\pickuptwo*

\begin{proof}[Proof of \cref{pick_up_two}]
  Let $\mathcal{P}:=\{P_\mu:\mu\in \VE(G_a)\}$ be a grid partition of $G$ and let $\mu_v,\mu_w\in \VE(G_a)$ be such that $v\in V(P_{\mu_v})$ and $w\in V(P_{\mu_w})$.  Define $\mu_s$ and $\mu_t$ similarly, with respect to $s$ and $t$.  Very roughly, this lemma says that the grid $G_a$ contains a path $P_a$ with $\mu_v,\mu_w\in\VE(P_a)$ whose first edge/vertex is $\mu_s$, and whose last edge/vertex is $\mu_t$.

  Although this is a simplification of the problem, we first describe how to solve it.  For this simpler problem, we may assume that each of $\mu_s$, $\mu_v$, $\mu_w$, and $\mu_t$ is an edge of $G_a$ since we can replace any vertex of $G_a$ with one of its incident edges. Refer to \cref{generic_path}. Beginning at $\mu_s$, $P_a$ can traverse the boundary of $G_a$ until reaching the first column $i$ that contains an endpoint of $\mu_v$ or $\mu_w$, then vertically in this column to collect $\mu_v$ (say). What happens next depends on whether or not $\mu_w$ also intersects column $i$.
  \begin{figure}[htbp]
    \begin{center}
      \begin{tabular}{cc}
        \includegraphics{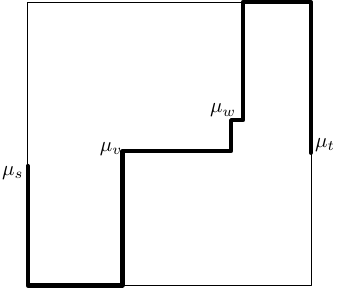} &
        \includegraphics{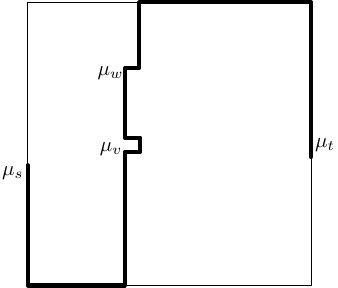}
      \end{tabular}
    \end{center}
    \caption{A simplification of \cref{pick_up_two}}
    \label{generic_path}
  \end{figure}
  \begin{compactitem}
    \item If $\mu_w$ also intersects column $i$, then $P_a$ can immediately return to column $i$ if necessary (if $\mu_v$ was a horizontal edge) and collect $\mu_w$. The only care that needs to be taken in this case occurs when $\mu_v$ and $\mu_w$ are a pair of horizontal and vertical edges that share an endpoint.  When this happens, the initial traversal on the boundary should be done (clockwise or counterclockwise) so that the vertical edge appears first in $P_a$.

    \item If $\mu_w$ does not intersect column $i$, then after collecting $\mu_v$, $P_a$ can proceed horizontally to the first column $j$ that intersects $\mu_w$ and then vertically in column $j$ to collect $\mu_w$.
  \end{compactitem}
  In either case, after collecting $\mu_w$, $P_a$ can then proceed vertically to return to the boundary and then traverse the boundary to finish at $\mu_t$.

  The rough description given above works perfectly when $\mu_v$ and $\mu_w$ are each edges of $G_a$.  However, some mild complications arise when one or both of $\mu_v$ or $\mu_w$ are vertices of $G_a$.  This is due to the fact that a path $P_a$ in $G_a$ that contains a vertex $\nu$ may not correspond to a path $P$ in $G$ that contains $P_\nu$.  Indeed, this depends on whether $P_a$ `turns' at $\nu$ and whether $P_\nu$ was created using (Q\ref{q_i}), (Q\ref{q_ii}), or (Q\ref{q_iii}).  We say that a $\mu_v$ or $\mu_w$ is \defin{straight} if it was created using (Q\ref{q_i}) or (Q\ref{q_ii}) and $\mu_v$ or $\mu_w$ is \defin{bent} if it was created using (Q\ref{q_iii}).  Without loss of generality, assume that $\mu_v$ is a vertex in column $i$ of $G_a$ and that $\mu_w$ does not intersect columns $2,\ldots,i-1$.  Our strategy is to replace $\mu_v$ (and possibly also $\mu_w$) with a pair of edges incident on $\mu_v$ in such a way that the algorithm described above is able to construct a path $P_a$ that contains the resulting collection of $3$ or $4$ edges.  What follows is a (boring) case analysis (see \cref{cases}):

  \begin{figure}
    \begin{center}
      \includegraphics{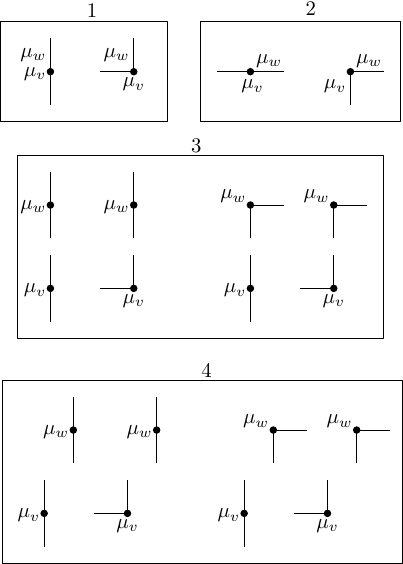}
    \end{center}
    \caption{Cases in the proof of \cref{pick_up_two}}
    \label{cases}
  \end{figure}

  \begin{compactenum}
    \item If $\mu_w$ is a vertical edge incident on $\mu_v$ then there are two possibilities:
    \begin{compactitem}
      \item If $\mu_v$ is straight then we replace $\mu_v$ with the other vertical edge incident on $\mu_v$.
      \item If $\mu_v$ is bent then we replace $\mu_v$ with the horizontal edge joining $\mu_v$ to a vertex in column $i-1$.
    \end{compactitem}
    \item If  $\mu_w$ is a horizontal edge incident to $\mu_v$ then there are two possibilities:
    \begin{compactitem}
      \item If $\mu_v$ is straight then we replace it with the other horizontal edge incident to $\mu_v$.
      \item If $\mu_v$ is bent then we replace it with one of the vertical edges incident to it.
    \end{compactitem}
    \item If $\mu_w$ intersects column $i$ but is not an edge incident to $\mu_v$ then we may assume, without loss of generality, that $\mu_w$ is above $\mu_v$.  There are two cases to consider:
    \begin{compactitem}
      \item If $\mu_v$ is straight, then we replace $\mu_v$ with the two vertical edges incident on $\mu_v$.
      \item If $\mu_v$ is bent, then we replace $\mu_v$ with the horizontal edge joining $\mu_v$ to a vertex in column $i-1$ and the vertical edge incident to $\mu_v$ and above $\mu_v$.
    \end{compactitem}
    If $\mu_w$ is an edge of $G_a$ then there is nothing else to do. If $\mu_w$ is a vertex of $G_a$ then there are two cases to consider:
    \begin{compactitem}
      \item If $\mu_w$ is straight, then we replace $\mu_w$ with the two vertical edges incident on $\mu_w$.
      \item If $\mu_w$ is bent, then we replace $\mu_w$ with the horizontal edge joining $\mu_v$ to a vertex in column $i+1$ and the vertical edge incident to $\mu_w$ and below $\mu_w$.
    \end{compactitem}
    \item If $\mu_w$ does not intersect column $i$ there are two possibilities:
    \begin{compactitem}
      \item If $\mu_v$ is straight, then we replace $\mu_v$ with the two vertical edges incident on $\mu_v$.
      \item If $\mu_v$ is bent, then we replace $\mu_v$ with the horizontal edge joining $\mu_v$ to a vertex in column $i-1$ and a vertical edge in column $i$.
    \end{compactitem}
    If $\mu_w$ is an edge of $G_a$ then there is nothing further to do. If $\mu_w$ is a vertex of $G_a$ in column $j>i$ then there are again two possibilities:
    \begin{compactitem}
      \item If $\mu_w$ is straight, then we replace $\mu_w$ with the two vertical edges incident on $\mu_w$.
      \item If $\mu_w$ is bent, then we replace $\mu_w$ with the horizontal edge joining $\mu_v$ to a vertex in column $j+1$ and a vertical edge in column $j$.
    \end{compactitem}
  \end{compactenum}
  Now, exactly the same strategy used above can be used to construct a path $P_a$ that contains the (up to four) required edges of $G_a$ and the corresponding path $P$ in $G$ satisfies the requirements of the lemma.
\end{proof}

\end{document}